
\documentclass[11pt,reqno]{amsart}
\usepackage{graphicx,amsmath,amsfonts,latexsym,amssymb,amsthm,color}
\usepackage{latexsym}
\usepackage{verbatim}
\usepackage{enumerate}

\usepackage[a4paper]{geometry}
\definecolor{blau}{rgb}{0.05,0.2,0.7}
\definecolor{rot}{rgb}{0.99,0.02,0.02}
\definecolor{auchblau}{rgb}{0.03,0.3,0.7}
\usepackage[pdftex,linkbordercolor={0.8 0.5 0.2}]{hyperref} 
\hypersetup{
pdfauthor = {Joern Mueller, Alexander Strohmaier},
pdftitle = {Hahn-holomorphic functions},
pdfsubject ={Hahn-holomorphic functions}{Hahn series}{Fredholm Theorem}{Scattering Theory}{Spectral Theory},
colorlinks = true,
linkcolor = blau,
citecolor = auchblau
}

\setlength{\parindent}{0ex}
\setlength{\parskip}{1.5ex}

\newcommand{\Id}{\mathrm{Id}}

\renewcommand{\Re}{\operatorname{Re}}
\renewcommand{\Im}{\operatorname{Im}}
\DeclareMathOperator{\supp}{supp}

\newcommand{\Complex}{\mathbb{C}}
\newcommand{\Reell}{\mathbb{R}}
\newcommand{\Nat}{\mathbb{N}}
\newcommand{\Rational}{\mathbb{Q}}
\newcommand{\Ganz}{\mathbb{Z}}
\newcommand{\vol}{{\mathrm{vol}}}
\newcommand{\ZZ}{\mathcal{Z}}
\newcommand{\h}[1]{\mathfrak{#1}}
\newcommand{\eps}{\varepsilon}
\newcommand{\infn}[2]{\left\|#1\right\|_{#2}}
\newcommand{\rmi}{{\rm i}}


\newcommand{\sce}[2]{{\langle #1\,,\:#2\rangle}}



\newtheorem{theorem}{Theorem}[section]
\newtheorem{definition}[theorem]{Definition}
\newtheorem{example}[theorem]{Example}
\newtheorem{lemma}[theorem]{Lemma}
\newtheorem{corollary}[theorem]{Corollary}
\newtheorem{proposition}[theorem]{Proposition}
\newtheorem{rem}[theorem]{Remark}


\title[Hahn holomorphic functions]{The theory of Hahn meromorphic functions, a holomorphic Fredholm theorem and its applications}

\author[J.~M\"uller]{J\"orn M\"uller}
\address{Humboldt-Universit\"at zu Berlin,
Unter den Linden 6,
Institut f\"ur Mathematik,
D - 10099 Berlin, Germany} \email{jmueller@math.hu-berlin.de}

\author[A.~Strohmaier]{Alexander Strohmaier}
\address{Department of Mathematical Sciences,  Loughborough University,  Loughborough, Leicestershire, LE11 3TU,
UK} \email{a.strohmaier@lboro.ac.uk}

\thanks{Both authors were supported by the \emph{SFB 647: Space--Time--Matter. Analytic and Geometric Structures}}

\begin{document}

\begin{abstract}
We introduce a class of functions near zero on the logarithmic cover of the complex plane
that have convergent expansions into generalized power series. The construction covers cases where
non-integer powers of $z$ and also terms containing $\log z$ can appear.
We show that under natural assumptions some important theorems from complex analysis carry over to the class of these functions. In particular it
is possible to define a field of functions that generalize meromorphic functions and one can
formulate an analytic Fredholm theorem in this class. We show that this modified analytic Fredholm
theorem can be applied in spectral theory to prove convergent expansions of the resolvent for Bessel type operators
and Laplace-Beltrami operators for manifolds that are Euclidean at infinity. These results are important in scattering theory
as they are the key step to establish analyticity of the scattering matrix and the existence of generalized eigenfunctions at points
in the spectrum.
\end{abstract}

\maketitle


\section{Introduction}

Asymptotic expansions of the form
$$
f(z) \sim \sum_{k,m} a_{k,m} z^{\alpha_k} (-\log(z))^{\beta_m}, \quad \textrm{as} \quad z \to 0
$$
with non-integer $\alpha_k$ or $\beta_m$
for functions $f$ defined in some sector centered at $0$ in the complex plane
appear quite frequently in mathematics and mathematical physics.
Classical examples are solutions for differential equations (e.g. in Frobenius' method)
or expansions of algebraic functions at singularities. More recently it was shown
that low energy resolvent expansions in scattering problems are of this form
(see e.g.\ \cite{Kato-Jensen}, \cite{Jensen-Nenciu} for Schr\"odinger operators
in $\Reell^n$, \cite{Murata} for operators with constant leading coefficients
in  $\Reell^n$  and \cite{Guillarmou-Hassell} for the case of the Laplace operator
on a general manifold with a conical end). The resolvent expansion for $|\lambda|\to\infty$ of cone degenerate differential operators leads to similar asymptotics, see e.g.~\cite{gmk}.
In many of the examples above the expansions can be shown to be convergent under more restrictive assumptions
on the structure at infinity of the underlying geometry.

The algebraic theory of generalized power series is well developed and can be found
in the literature under the name Hahn series or Mal'cev-Neumann series (e.g.~\cite{Hahn-Orig}, \cite[Chapter 13]{passman}, \cite{ribenboim}). In this paper
we are concerned with the analytic theory of such generalized power series, namely
we will define a ring of functions, the Hahn holomorphic functions, that have convergent expansions into generalized power series, 
and we will show that
this ring is actually a division ring. We show that the quotient field, the field of Hahn-meromorphic
functions, has a nice description in terms of Hahn series and we generalize the notions
of Hahn-holomorphic and Hahn-meromorphic functions to the operator valued case.
The theory turns out to be very close to the case of analytic function theory. In particular
one of our main theorems states that an analog of the analytic Fredholm theorem holds in the class
of Hahn holomorphic functions.

The holomorphic Fredholm theorem plays an important role in geometric scattering theory as a tool to prove
the existence of a meromorphic continuation of resolvent kernels of elliptic differential operators such as the Laplace operator.
The extension is typically from the resolvent set across the continuous spectrum to a branched covering of the complex plane. 
As soon as such a meromorphic continuation
of the resolvent kernel is established, resonances can be defined as poles of its continuation, 
generalized eigenfunctions may be defined as meromorphic functions of a suitably chosen spectral parameter, 
and an analytic continuation of the scattering matrix  may be constructed.
This in many situations leads to a rich mathematical structure
that results in functional equations for the scattering matrix and Maass-Selberg relations for the generalized eigenfunctions
(see e.g. \cite{Mueller-LecN} for the case of manifolds with cusps of rank one, \cite{Melrose:APS,guil,MS} for manifolds with cylindrical ends, and
\cite{JMpaper} for manifolds with fibred cusps).
In particular the analytic continuation of  Eisenstein series may be regarded as a special case of this more general construction.

Often, as for example in the case of $\mathbb{R}^{2n+1}$, on asymptotically hyperbolic manifolds (\cite{Mazzeo-Melrose:Meromorphic,Guillarmou:Meromorphic}), geometrically finite hyperbolic manifolds 
(\cite{Guillarmou-Mazzeo:cusps}), and on globally symmetric spaces 
of odd rank (\cite{Mazzeo-Vasy:Symmetric,Strohmaier:Symmetric}), the branch points of the covering of the complex plane
are algebraic and can be resolved by a change of variables. In this way one can make sense of the statement that the resolvent
is meromorphic at the branch point.
In other examples, as in $\mathbb{R}^{2n}$, on symmetric spaces of even rank (\cite{Mazzeo-Vasy:Symmetric,Strohmaier:Symmetric}), and on manifolds with generalized cusps \cite{hrs}
the branch point is logarithmic  and this statement loses its meaning. 
The analytic Fredholm theorem can then only be applied away from the branching points. Our philosophy is that at such branching points it still makes sense to say when functions
are Hahn-holomorphic, i.e. have a convergent expansion into more general power series possibly containing $\log$-terms.
Our Hahn analytic Fredholm theorem therefore allows to analyze the resolvents at non-algebraic branching points.
Our theorem implies for example that the Hahn meromorphic properties of the resolvent
of the Laplace operator on a Riemannian manifold are stable under 
perturbations of the topology and the metric that are supported in compact regions.
The theory can be developed further to establish
Hahn-analyticity of the scattering matrix and of the generalized eigenfunctions in this context, but we decided to focus
on the theoretical properties of Hahn-meromorphic functions first and keep the presentation self-contained. The applications in scattering
theory will be developed elsewhere.

The article is organized as follows. Section \ref{abschnittHHol} deals with the definition and the general theory
of Hahn holomorphic functions and some of their basic properties. In section \ref{section:mero} we define Hahn
meromorphic functions and in section \ref{section:fred} we prove our generalization of the
meromorphic Fredholm theorem in the framework of Hahn holomorphic functions. Sections \ref{section:z}
and \ref{section:zlogz} deal with two important examples of convergent Hahn series: those that can be expanded into
real powers of $z$ and those that have such expansions with additional $\log(z)$-terms.
The theory has a nice application: convergent resolvent expansions for Bessel type operators and Laplace-Beltrami operators
on manifolds that are Euclidean at infinity can be shown to be simple consequences of the Hahn holomorphic Fredholm theorem.
These examples are treated in detail in section \ref{section:beispiel}; the main results here are Theorem \ref{propext1.2} and
Theorem \ref{hahnperturbed}.

We would like to thank the anonymous referee for suggestions leading
to considerable
simplifications in some of the arguments in section \ref{section:beispiel}.

\section{Hahn holomorphic functions}
\label{abschnittHHol}

Let $(\Gamma,+)$ be a linearly ordered abelian group and let $(G,\cdot)$ be a group.
Suppose $e:\Gamma\to G, \gamma\mapsto e_\gamma$ is a group homomorphism, in particular
\[
e_0=\mathbf{1}\in G,\qquad e_{\gamma_1+\gamma_2}=e_{\gamma_1}\cdot e_{\gamma_2} \quad\text{for all}\quad\gamma_1,\gamma_2\in\Gamma.
\]

The following definition and proposition are due to H.~Hahn (see \cite{Hahn-Orig})

\begin{definition}
Let $\mathcal{R}$ be a ring. A formal series
\[
\h{h}=\sum_{\gamma\in \Gamma} a_{\gamma} e_\gamma,\qquad  a_{\gamma}\in \mathcal{R}
\]
is called a \emph{Hahn-series}, if  the support of $\h{h}$,
\[
\supp(\h{h}):=\{g\in \Gamma\mid a_g\neq 0\in\mathcal{R}\},
\]
is a  well-ordered subset of $\Gamma$. The set of Hahn-series will be denoted by $\mathcal{R}[[e_\Gamma]]$.
\end{definition}

\begin{proposition}
The set of Hahn series $\mathcal{R}[[e_{\Gamma}]]$ is a ring with multiplication
\begin{align}
\Big(\sum_{\alpha\in \Gamma} a_{\alpha} e_\alpha\Big)
\Big(\sum_{\beta\in \Gamma} b_{\beta} e_\beta\Big)
&=\sum_{\gamma\in\Gamma}c_\gamma e_\gamma,
\qquad c_\gamma:=\!\!\!\sum_{\substack{(\alpha,\beta)\in \Gamma\times \Gamma \\ \alpha+\beta=\gamma}}
a_{\alpha}  b_{\beta}  \label{hahn-prod}\\
\intertext{and addition}
\sum_{\alpha\in \Gamma} a_{\alpha} e_\alpha
+\sum_{\beta\in \Gamma} b_{\beta} e_\beta
&=\sum_{\gamma\in\Gamma}
(a_{\gamma} +b_{\gamma})  e_\gamma\nonumber
\end{align}
If  $\mathcal{R}$ is a field, then $\mathcal{R}[[e_{\Gamma}]]$ is a field.
\end{proposition}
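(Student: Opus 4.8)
The plan is to separate the ring structure from the field structure and to isolate a single order-theoretic lemma that carries all the weight. First I would check that addition is well defined: the support of a sum is contained in $\supp(\h{h}_1)\cup\supp(\h{h}_2)$, and the union of two well-ordered subsets of the linearly ordered set $\Gamma$ is again well-ordered, since an infinite strictly descending chain in the union would have an infinite descending subchain lying in one of the two pieces. That $(\mathcal{R}[[e_\Gamma]],+)$ is an abelian group is then inherited coefficientwise from $\mathcal{R}$.

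The real content is making sense of the product. Here I would invoke the lemma of Neumann: if $A,B\subseteq\Gamma$ are well-ordered, then (i) for each $\gamma\in\Gamma$ there are only finitely many pairs $(\alpha,\beta)\in A\times B$ with $\alpha+\beta=\gamma$, and (ii) the sumset $A+B$ is well-ordered. Fact (i) guarantees that each $c_\gamma$ in \eqref{hahn-prod} is a finite sum in $\mathcal{R}$ and hence well defined, and fact (ii) guarantees that $\supp(\h{h}_1\h{h}_2)\subseteq A+B$ is well-ordered, so the product is again a Hahn series. Both facts follow from one device: given either an infinite family of distinct representations of a fixed $\gamma$, or an infinite descending chain $\gamma_1>\gamma_2>\cdots$ in $A+B$, write $\gamma$ (resp.\ $\gamma_i$) as $\alpha_i+\beta_i$ and use the well-ordering of $A$ to pass to a subsequence along which $\alpha_i$ is nondecreasing; then $\beta_i=\gamma-\alpha_i$ (resp.\ $\gamma_i-\alpha_i$) is, after a further thinning, strictly descending in $B$, contradicting that $B$ is well-ordered. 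With products well defined, associativity and distributivity reduce to rearranging the finite sums $\sum a_\alpha b_\beta c_\delta$ over the finitely many solutions of $\alpha+\beta+\delta=\gamma$, so the ring axioms are routine and the unit is $e_0=\mathbf{1}$.

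For the field statement, assume $\mathcal{R}$ is a field and let $\h{h}\neq 0$. Since $\supp(\h{h})$ is well-ordered it has a least element $\gamma_0$, and I can write $\h{h}=a_{\gamma_0}e_{\gamma_0}(\mathbf{1}+\h{m})$ with $\supp(\h{m})\subseteq\Gamma_{>0}:=\{\gamma\in\Gamma:\gamma>0\}$, where $a_{\gamma_0}$ is invertible in the field $\mathcal{R}$ and $e_{\gamma_0}$ is invertible as a Hahn series with inverse $e_{-\gamma_0}$. It therefore suffices to invert $\mathbf{1}+\h{m}$, and the natural candidate is the geometric series $\sum_{k\ge 0}(-\h{m})^k$; the task is to show this formal sum is a genuine Hahn series, i.e.\ that every coefficient is a finite sum in $\mathcal{R}$ and that the total support is well-ordered.

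This last point is the main obstacle, and it is precisely the submonoid version of Neumann's lemma: if $S\subseteq\Gamma_{>0}$ is well-ordered, then the submonoid $\langle S\rangle$ it generates is well-ordered and every element of $\langle S\rangle$ admits only finitely many representations as a sum of elements of $S$. The subtlety here, which rules out the naive bound ``$\gamma\ge k\cdot\min S$ forces $k$ bounded'', is that $\Gamma$ need not be Archimedean, so a genuinely fresh descending-chain argument is required rather than a direct counting estimate. Granting the lemma, the support of $\h{m}^k$ lies in $\langle S\rangle$ with $S=\supp(\h{m})$; finiteness of representations makes each coefficient of $\sum_k(-\h{m})^k$ a finite sum, and well-ordering of $\langle S\rangle$ makes the total support well-ordered. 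A direct computation then gives $(\mathbf{1}+\h{m})\sum_{k\ge 0}(-\h{m})^k=\mathbf{1}$, whence $\h{h}^{-1}=e_{-\gamma_0}\,a_{\gamma_0}^{-1}\sum_{k\ge 0}(-\h{m})^k$, completing the proof.
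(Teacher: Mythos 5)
The paper gives no proof of this proposition at all --- it is attributed to Hahn with a citation, and the one nontrivial order-theoretic ingredient (that the semigroup generated by a well-ordered subset of $\Gamma^+$ is again well-ordered) is itself only cited, to Passman, Lemma 2.10, in the paragraph immediately following the statement. So there is no internal proof to compare against; your argument is the standard one and its architecture is correct. The split into (a) closure of well-ordered sets under union and sumset, (b) finiteness of the fibres of $(\alpha,\beta)\mapsto\alpha+\beta$ over $A\times B$, and (c) the submonoid version of Neumann's lemma for inverting $\mathbf{1}+\h{m}$ is exactly right, and your single thinning device --- pass to a subsequence along which the $A$-components are nondecreasing (hence, after discarding repetitions, strictly increasing), forcing the $B$-components to descend strictly --- correctly delivers both (a) and (b), and with it the well-definedness of \eqref{hahn-prod} and the ring axioms. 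The one place you stop short is (c): you state that $\langle S\rangle$ is well-ordered with finite representation fibres for well-ordered $S\subseteq\Gamma_{>0}$, correctly observe that the naive bound via $\gamma\ge k\cdot\min S$ fails when $\Gamma$ is non-Archimedean, and then grant the lemma. Since the paper does exactly the same (citation only) and this is a classical result, that is a defensible place to outsource; just be aware it is the only step in the entire proposition that is not handled by the two-sequence thinning trick --- one genuinely needs a minimal-bad-sequence or transfinite-induction argument there --- so without supplying it the field half of the statement is not self-contained.
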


If the support of $\h{h}$ is contained in $\Gamma^+=\{\gamma \mid \gamma > 0\}$
then it is well known that $\mathbf{1}-\h{h}$ is invertible in $\mathcal{R}[[e_{\Gamma}]]$ and its inverse
is given by the Neumann series
\[
(\mathbf{1}-\h{h})^{-1} = \sum_{k=0}^\infty \h{h}^k.
\]
This is due to the fact that for any well-ordered subset $W$
of $\Gamma^+$ the semi-group generated by $W$ is also well-ordered, see e.g.\ \cite{passman}, Lemma 2.10.
Here convergence of a sequence $(\h{p}_n)\subset\mathcal{R}[[e_\Gamma]]$ to $\h{p}\in\mathcal{R}[[e_\Gamma]]$ is understood in the sense that
for every element $\alpha \in \Gamma$ there exists an $N>0$ such
that for all $n>N$ the coefficients of $e_\alpha$ in $\h{p}$ and $\h{p}_n$ are equal.

In the following let $\ZZ$ be the logarithmic covering surface of the complex plane without the origin.
We will use polar coordinates $(r,\varphi)$ as global coordinates to identify $\ZZ$ as a set with $\Reell_+ \times \Reell$.
Adding a single point $\{0\}$ to $\ZZ$ we obtain a set $\ZZ_0$ and a projection map $\pi: \ZZ_0 \to \Complex$ by extending the covering map $\ZZ \to \Complex\backslash \{0\}$ in sending $0 \in \ZZ_0$ to $0 \in \Complex$.
We endow $\ZZ$ with the covering topology and $\ZZ_0$ with the topology generated by the open sets in $\ZZ$
together with the open discs $D_\epsilon:=\{0\} \cup \{(r,\varphi)\mid 0\le r<\epsilon \}$.
This means a sequence $((r_n,\varphi_n))_n$ converges to zero if and only if $r_n \to 0$. The covering map is continuous
with respect to this topology.
For a point $z \in \ZZ_0$
we denote by $| z |$ its $r$-coordinate and by $\arg(z)$ its $\varphi$ coordinate. We will think of the positive
real axis as embedded in $\ZZ$ as the subset $\{ z \mid \arg(z)=0\}$. 
In the following $Y \subset \ZZ$ will always denote an open subset containing an open interval $(0,\delta)$
for some $\delta>0$ and such that $0 \notin Y$. The set $Y_0$ will denote $Y \cup \{0\}$.
In the applications we have in mind the set $Y$ is typically of the form
$D_\delta^{[\sigma]} \backslash \{0\}$ where
$D_\delta^{[\sigma]}=\{z \in \ZZ_0 \mid 0\le  |z| < \delta,\; |\varphi| < \sigma\}$. For the discussion and the general theorems
it is not necessary to restrict ourselves to this case.

In the remaining part of this article we assume that $G:=\big(\mathrm{Hol}(Y \cap D_\epsilon),\cdot\big)^{\times}$
is a set of non-vanishing holomorphic functions and that the
group homomorphism $e$ satisfies the condition
\[
\tag{E1}\label{Eone} \forall \gamma>0: \text{$e_\gamma$ is bounded on $Y$ and}\quad \lim_{z\to 0} |e_\gamma(z)|=0.
\]
\vspace{0.1ex}

\begin{definition}
Suppose that $\mathcal{R}$ is a vector space with norm $\|.\|$.
A Hahn series $\h{f}=\sum_{\alpha\in \Gamma} a_\alpha e_\alpha$  is called \emph{normally convergent} in $Y \cap D_\epsilon$ if its support is countable and
\begin{gather*}
\sum_{\alpha\in \Gamma} \|a_\alpha \| \infn{e_\alpha}{Y,\eps}< \infty,
\end{gather*}
where
$\infn{e_\alpha}{Y,\eps}:=\sup_{z\in Y \cap D_\epsilon}|e_\alpha(z)|.$
\end{definition}
Since a normally convergent series converges absolutely and uniformly, the value of the function
\[
f(z)= \sum_{\alpha\in \Gamma} a_\alpha e_\alpha(z),\qquad z\in Y\cap D_\eps
\]
does not depend on the order of summation and $f$ is holomorphic in $z\neq 0$.
\begin{definition}
Let  $S\subset \Gamma_0^+=\Gamma^+\cup\{0\}$ be a subset of the non-negative group elements.
\begin{itemize}
\item The family $
\{e_{\alpha}\}_{\alpha\in S}
$ is called \emph{weakly monotonous}, if there exists an $r_{S}>0$ such that for every $x\in(0,r_{S})$ there is a
\emph{radius} $\rho(x)$ with $0<\rho(x)\le x$ and with the property
\[
\alpha\in S \Rightarrow \infn{e_\alpha}{Y,\rho(x)} \le |e_\alpha(x)|.
\]
\item The set $S$ is called \emph{admissible for $e$} (or simply \emph{admissible}), if
$
\{e_{\alpha}\}_{\alpha\in S}
$ is weakly monotonous, and if for every $B\subset S$ also the family
\[
\{e_{\alpha-\min B}\}_{\substack{\alpha\in S\\ \alpha>\min B}}
\]
is weakly monotonous.
\end{itemize}

\end{definition}

\begin{definition}[Hahn holomorphic functions] \label{hhf}
Suppose that $\mathcal{R}$ is a Banach algebra.
A continuous function  $h: Y_0 \to \mathcal{R}$ which is holomorphic in $Y$, is called $(Y,\Gamma)$-Hahn holomorphic (or simply \emph{Hahn holomorphic}) if there is a Hahn-series \[
\h{h}=\sum_{\gamma\in \Gamma} a_{\gamma} e_\gamma,\qquad  a_{\gamma}\in \mathcal{R},
\]
with countable, admissible support,
which converges normally on $Y \cap D_\delta$ for some $\delta>0$, and
\[
h(z)=\sum_{\gamma\in \Gamma} a_{\gamma} e_\gamma(z), \quad z \in Y \cap D_\delta.
\]
\end{definition}

We will denote the Hahn series of a Hahn holomorphic function $h$ by the corresponding ``fraktur'' letter $\h{h}$.
Note that (E1) together with uniform convergence imply that $\supp\:\h{h}\subset \Gamma_0^+$ and $h(0)=a_0$. Of course any normally convergent Hahn
series with admissible support gives rise to a Hahn holomorphic function.

A direct consequence of the support of Hahn holomorphic functions being admissible is
\begin{lemma}\label{lem2.6}
Let
\[
h(z)=\sum_{\gamma\in \Gamma} a_{\gamma} e_\gamma(z), \quad z \in Y \cap D_{2 r}.
\]
be Hahn holomorphic with $\mathfrak{m}=\min\supp(\h{h})$. Then
\[
e_{-\mathfrak{m}}(z) h(z)=\sum_{\gamma\ge \mathfrak{m}} a_{\gamma} e_{\gamma-\mathfrak{m}}(z)
\]
is Hahn holomorphic.
\end{lemma}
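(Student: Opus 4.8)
The plan is to verify that the translated series
\[
\sum_{\gamma\ge\mathfrak{m}}a_\gamma e_{\gamma-\mathfrak{m}}
\]
is again a Hahn series with countable, admissible support that converges normally on a small disc, and then to invoke the remark following Definition~\ref{hhf} that any such series defines a Hahn holomorphic function. That this function is $e_{-\mathfrak{m}}h$ is immediate: for fixed $z\in Y\cap D_{2r}$ the number $e_{-\mathfrak{m}}(z)$ is a scalar and $h(z)=\sum_\gamma a_\gamma e_\gamma(z)$ converges absolutely (normal convergence forces $\sum_\gamma\|a_\gamma\|\,|e_\gamma(z)|<\infty$), so it may be multiplied through term by term, and since $e$ is a homomorphism $e_{-\mathfrak{m}}(z)e_\gamma(z)=e_{\gamma-\mathfrak{m}}(z)$.

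First I would settle the support, which is the ``direct consequence'' referred to before the statement. Put $S=\supp(\h{h})$ and let $S'=\{\gamma-\mathfrak{m}\mid\gamma\in S\}$ be the support of the new series. Translation by $-\mathfrak{m}$ is an order isomorphism of $\Gamma$, so $S'$ is again countable and well-ordered; and because $\mathfrak{m}=\min S$ we have $\min S'=0$, whence $S'\subset\Gamma_0^+$. For admissibility, note that to any $B'\subset S'$ corresponds $B:=\{\beta+\mathfrak{m}\mid\beta\in B'\}\subset S$ with $\min B=\min B'+\mathfrak{m}$, so that $\alpha-\min B'=(\alpha+\mathfrak{m})-\min B$ for all $\alpha$; reindexing $\beta=\alpha+\mathfrak{m}$ therefore gives
\[
\{e_{\alpha-\min B'}\}_{\alpha\in S',\,\alpha>\min B'}=\{e_{\beta-\min B}\}_{\beta\in S,\,\beta>\min B},
\]
and the right-hand family is weakly monotonous since $S$ is admissible. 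Taking $B'=S'$ (so $B=S$) shows in particular that $\{e_{\alpha}\}_{\alpha\in S',\,\alpha>0}$ is weakly monotonous; adjoining the single index $\alpha=0$, for which $e_0=\mathbf{1}$ and the defining inequality $\infn{e_0}{Y,\rho}=1=|e_0(x)|$ holds trivially for every radius, shows that $\{e_\alpha\}_{\alpha\in S'}$ is weakly monotonous as well. Hence $S'$ is admissible.

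The real obstacle is normal convergence of the translated series, because when $\mathfrak{m}>0$ the factor $e_{-\mathfrak{m}}=e_{\mathfrak{m}}^{-1}$ blows up as $z\to0$, so $\infn{e_{\gamma-\mathfrak{m}}}{Y,\eps}$ cannot simply be compared with $\infn{e_\gamma}{Y,\eps}$. The idea is to use weak monotonicity of $\{e_{\gamma-\mathfrak{m}}\}_{\gamma>\mathfrak{m}}$ to trade a supremum over a disc for a value at one point of the positive real axis, where $e_{\mathfrak{m}}$ is a harmless nonzero constant. Let $r_0>0$ and $\rho(\cdot)$ witness this weak monotonicity, let $\eps>0$ be a radius of normal convergence of $\h{h}$, and (shrinking $\eps$ if necessary, using $(0,\delta)\subset Y$) assume $(0,\eps)\subset Y$. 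Fix $x_0\in(0,\min(\eps,r_0))$ and set $\rho_0:=\rho(x_0)$. Then $x_0\in Y\cap D_\eps$, and for every $\gamma>\mathfrak{m}$
\[
\infn{e_{\gamma-\mathfrak{m}}}{Y,\rho_0}\le|e_{\gamma-\mathfrak{m}}(x_0)|=\frac{|e_\gamma(x_0)|}{|e_{\mathfrak{m}}(x_0)|}\le\frac{1}{|e_{\mathfrak{m}}(x_0)|}\,\infn{e_\gamma}{Y,\eps}.
\]
Summing against $\|a_\gamma\|$ and isolating the term $\gamma=\mathfrak{m}$, for which $\infn{e_0}{Y,\rho_0}=1$, yields
\[
\sum_{\gamma\ge\mathfrak{m}}\|a_\gamma\|\,\infn{e_{\gamma-\mathfrak{m}}}{Y,\rho_0}\le\|a_{\mathfrak{m}}\|+\frac{1}{|e_{\mathfrak{m}}(x_0)|}\sum_{\gamma>\mathfrak{m}}\|a_\gamma\|\,\infn{e_\gamma}{Y,\eps}<\infty,
\]
the last sum being finite by normal convergence of $\h{h}$. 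Thus the translated series converges normally on $Y\cap D_{\rho_0}$, and together with the admissibility of $S'$ this provides all the hypotheses needed to conclude by the remark following Definition~\ref{hhf}.
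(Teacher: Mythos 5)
Your proof is correct and follows essentially the same route as the paper's: the key step in both is to use the weak monotonicity of the shifted family $\{e_{\gamma-\mathfrak{m}}\}$ (guaranteed by admissibility of $\supp\h{h}$ with $B=\supp\h{h}$) to bound $\infn{e_{\gamma-\mathfrak{m}}}{Y,\rho}$ by a point value on the positive real axis, and then to exploit $|e_{\gamma-\mathfrak{m}}(x)|\,|e_{\mathfrak{m}}(x)|=|e_{\gamma}(x)|$ together with normal convergence of $\h{h}$. The only difference is cosmetic: you divide by the nonzero constant $|e_{\mathfrak{m}}(x_0)|$ where the paper multiplies through by $\infn{e_{\mathfrak{m}}}{\rho(r)}$, and you spell out the admissibility of the translated support, which the paper treats as immediate.
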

\begin{proof}
Let $\rho_1$ be the radius for $\{e_\gamma\}$ such that for all $\gamma\in\supp(\h{h})$
\[
\infn{e_\gamma}{\rho_1(r)}\le |e_\gamma(r)|.
\]
and similarly let $\rho_2$ the radius for $\{e_{\gamma-\mathfrak{m}}\}$. For $\rho(r)=\min\{\rho_1(r),\rho_2(r)\}$,
\begin{align*}
\infn{e_{\mathfrak{m}}}{\rho(r)} \sum_{\gamma\in \Gamma}\|a_{\gamma}\| \infn{e_{\gamma-\mathfrak{m}}}{\rho(r)}
\le |e_{\mathfrak{m}}(r)| \sum_{\gamma\in \Gamma}\|a_{\gamma}\| |e_{\gamma-\mathfrak{m}}(r)|
=\sum_{\gamma\in \Gamma}\|a_{\gamma}\| |e_{\gamma}(r)|<\infty
\end{align*}
Thus $\sum_{\gamma\in \Gamma}a_{\gamma} e_{\gamma-\mathfrak{m}}$ converges normally on $D_{\rho(r)}$.
\end{proof}

\begin{proposition}\label{prop1.3}
Let $f: Y \to \mathcal{R}$ be a Hahn holomorphic function represented by a Hahn series $\h{f}$ on $Y \cap D_\delta$.
Suppose the zeros of $f$ accumulate in $Y \cup \{0\}$. Then $f\equiv 0$ and $\h{f}=0$. In particular the Hahn series of a Hahn
holomorphic function is completely determined by the germ of the function at zero.
\end{proposition}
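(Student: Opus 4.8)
The plan is to combine the classical identity theorem for genuine holomorphic functions on the Riemann surface $Y$ with an extraction of the leading Hahn coefficient that exploits the well-ordering of the support together with Lemma~\ref{lem2.6}. It suffices to prove $\h{f}=0$: once this is known, $f$ equals its (vanishing) Hahn series on $Y\cap D_\delta$, hence $f\equiv 0$ on the nonempty open set $Y\cap D_\delta$, and the ordinary identity theorem then propagates $f\equiv 0$ through the component(s) of $Y$ meeting $Y\cap D_\delta$.

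First I would reduce to the situation of zeros accumulating at $0$. If the zeros accumulate at an interior point $z_0\in Y$, then, since $\ZZ$ is locally biholomorphic to $\Complex$ and $f$ is holomorphic there, the classical identity theorem gives $f\equiv 0$ on the connected component of $z_0$ in $Y$. Since this component then reaches arbitrarily close to $0$ (it meets the interval $(0,\delta)\subset Y$ when $Y$ is connected), I obtain a sequence $z_n\in Y$ with $f(z_n)=0$ and $|z_n|\to 0$; if instead the zeros already accumulate at $0$, such a sequence is given directly. In either case $z_n\to 0$ in the topology of $\ZZ_0$, and for $n$ large $z_n\in Y\cap D_\delta$, so that $f(z_n)=\sum_\gamma a_\gamma e_\gamma(z_n)=0$.

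The heart of the argument is the extraction of the smallest coefficient. Suppose, for contradiction, that $\h{f}\neq 0$. By \eqref{Eone} its support lies in $\Gamma_0^+$ and, being well-ordered, has a minimum $\mathfrak{m}=\min\supp(\h{f})\ge 0$. By Lemma~\ref{lem2.6} the function $g:=e_{-\mathfrak{m}}\,f$ is again Hahn holomorphic, with Hahn series $\sum_{\gamma\ge\mathfrak{m}}a_\gamma e_{\gamma-\mathfrak{m}}$ whose exponents are now nonnegative and whose coefficient of $e_0$ is $a_{\mathfrak{m}}$. By \eqref{Eone} and uniform convergence the value of a Hahn holomorphic function at $0$ equals its $e_0$-coefficient, so $g(0)=a_{\mathfrak{m}}$. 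Since $f(z_n)=0$ we have $g(z_n)=e_{-\mathfrak{m}}(z_n)f(z_n)=0$, and continuity of $g$ on $Y_0$ together with $z_n\to 0$ forces $a_{\mathfrak{m}}=g(0)=\lim_{n\to\infty}g(z_n)=0$. This contradicts $\mathfrak{m}\in\supp(\h{f})$; hence $\h{f}=0$, and the reduction above yields $f\equiv 0$ and the stated uniqueness of the germ.

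I expect the main obstacle to be conceptual rather than computational. The classical identity theorem disposes of the interior-accumulation case, but the genuinely new ingredient is replacing the usual ``lowest nonzero Taylor coefficient'' argument by the pair consisting of the well-ordering of the support and Lemma~\ref{lem2.6}, which lets me shift the minimal exponent to $0$ and read it off as the limit $g(0)$ using only continuity at the origin. The subtle point to watch is the passage from an interior accumulation point to a sequence tending to $0$: this relies on $Y$ being connected (as in the punctured-sector applications), so that analytic continuation within $Y$ carries the vanishing of $f$ into every neighbourhood of $0$; for a disconnected $Y$ the conclusion should be read on the component containing $(0,\delta)$, which is exactly the component that determines the germ at $0$.
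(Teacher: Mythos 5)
Your proof is correct and follows essentially the same route as the paper's: both isolate the minimal exponent $\mathfrak{m}$ of the support, factor out $e_{\mathfrak{m}}$ (via Lemma~\ref{lem2.6}), and read off the leading coefficient $a_{\mathfrak{m}}$ as the limit of $e_{-\mathfrak{m}}f$ at $0$, so that a zero sequence tending to $0$ would force $a_{\mathfrak{m}}=0$. The paper states this contrapositively (if $f\neq 0$ then $f$ has no zeros near $0$, writing $f=e_{\mathfrak{m}}(a_{\mathfrak{m}}+h)$ with $h(0)=0$) while you argue by contradiction, but the content is identical.
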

\begin{proof}
If the zero set of $f$ has accumulation points in $Y$ then the statement follows
from the fact that $f$ is holomorphic in this set. It remains to show that if $f \neq 0$
then  $0$ can not be an accumulation point of the zero set of $f$. Let $\h{f}$ be a Hahn series that represents the function on $Y \cap D_\epsilon$.
Let $f\neq 0$, hence $\h{f}\neq 0$. Let $\h{m}=\min\supp\:\h{f}$. If there is no other element in the support of $\h{f}$ then
$f(z)=a_{\h{m}} e_{\h{m}}(z)$ and the statement follows from the fact that $e_{\h{m}}$ has no zeros in $Y$. Otherwise, let
$\h{m}_1$ be the smallest element in $\supp\:\h{f}$ which is larger than $\h{m}$. Then
\[
f(z)=\sum_\alpha a_\alpha e_\alpha(z)
=e_{\h{m}}(z)\Big(a_{\h{m}}+ e_{\h{m}_1-\h{m}}(z)\sum_{\alpha\ge \h{m}_1} a_\alpha e_{\alpha-\h{m}_1}(z)\Big)
=e_{\h{m}}(z) (a_{\h{m}}+ h(z))
\]
with a Hahn holomorphic function $h(z)$ such that $h(0)=0$.
Since $h$ is continuous and $e_{\h{m}}(z)\neq 0$ this shows $f(z)\neq 0$ in a  neighborhood of $0$.
\end{proof}

In the following suppose $Y, \Gamma$ and the family of functions $(e_\gamma)_{\gamma \in \Gamma}$
is fixed and satisfies (E1).

We want to show that the space of Hahn holomorphic functions at $0$ with values in a Banach algebra $\mathcal{R}$  is a ring. To that end we need

\begin{lemma}\label{lem-admis}
Let $A_1, A_2 \subset \Gamma^+$ be admissible sets. Then the sets $A_1\cup A_2$, $A_1+A_2$ and $n\cdot A_1:=A_1+\ldots+A_1$ ($n$ times), $\bigcup_{n=0}^\infty n\cdot A_1$  are admissible.
\end{lemma}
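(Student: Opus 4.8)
The lemma is entirely a statement about the \emph{weakly monotonous} (WM) property, so I would first assemble a small toolkit; write $(A)_{>\gamma}:=\{\alpha\in A:\alpha>\gamma\}$. (i) A singleton $\{e_\gamma\}$ with $\gamma>0$ is WM: by (E1) we have $\sup_{Y\cap D_\rho}|e_\gamma|\to 0$ as $\rho\to 0$ while $|e_\gamma(x)|>0$, so any small enough $\rho(x)\le x$ works. (ii) Finite unions and arbitrary subsets of WM families are WM, by passing to the minimum of the finitely many radii (resp. reusing the radius). (iii) If $T_1,T_2$ are WM then the sumset $T_1+T_2$ is WM: on $\rho=\min(\rho_1,\rho_2)$ one has $\sup_{Y\cap D_\rho}|e_{\beta+\gamma}|\le \sup_{Y\cap D_\rho}|e_\beta|\cdot\sup_{Y\cap D_\rho}|e_\gamma|\le|e_\beta(x)||e_\gamma(x)|=|e_{\beta+\gamma}(x)|$. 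These three facts reduce everything to controlling shifts of admissible sets.

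The pivotal step is a Key Lemma: if $A$ is admissible then for \emph{every} $\gamma\in\Gamma$ the set $(A)_{>\gamma}-\gamma$ is WM. To see this put $\gamma'=\min(A)_{>\gamma}$, so that $(A)_{>\gamma}-\gamma=[(A)_{\ge\gamma'}-\gamma']+\{\gamma'-\gamma\}$; the family $\{e_{\alpha-\gamma'}\}_{\alpha\in A,\ \alpha\ge\gamma'}$ is WM by admissibility of $A$ (the shift by $\gamma'\in A$, together with $e_0$), $\{e_{\gamma'-\gamma}\}$ is WM by (i) since $\gamma'-\gamma>0$, and (iii) concludes. This removes the restriction ``$\gamma\in A$'' built into the definition and immediately yields that admissibility is hereditary: for $A'\subseteq A$, the set $A'$ is WM by (ii) and, for each $\delta\in A'$, $(A')_{>\delta}-\delta\subseteq (A)_{>\delta}-\delta$ is WM by the Key Lemma and (ii). The union $A_1\cup A_2$ is then easy: it is WM by (ii), and for any $\delta=\min B$ the shifted family splits as $[(A_1)_{>\delta}-\delta]\cup[(A_2)_{>\delta}-\delta]$, each piece WM by the Key Lemma irrespective of which $A_i$ contains $\delta$, and (ii) finishes.

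For the remaining sets I would reduce to a single doubling. Since $A_1+A_2\subseteq B+B$ with $B:=A_1\cup A_2$ admissible, and since $n\cdot A_1\subseteq (A_1\cup (n{-}1)A_1)+(A_1\cup(n{-}1)A_1)$, heredity together with an induction on the number of summands reduces the sumset and $n\cdot A_1$ to the single claim that $A+A$ is admissible for $A$ admissible. Here WM is (iii); for the shift I would use the successor trick as in the Key Lemma to reduce to a shift by an element $\delta=a+a'\in A+A$, and then split $(A+A)_{>\delta}-\delta$ into a \emph{pure} part, where both summands stay $\ge$ their bases and which is a sub-sumset of the WM set $((A)_{\ge a}-a)+((A)_{\ge a'}-a')$, and a \emph{mixed} part, where one summand drops below its base so that the factor $e_{\alpha'-a'}$ has negative index. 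The mixed part is the crux. The naive estimate $\sup|e_p e_{-q}|\le\sup|e_p|/\inf|e_q|$ is useless, because it would require a lower bound $\inf_{Y\cap D_\rho}|e_q|\ge|e_q(x)|$ — a reverse monotonicity that (E1) forbids — so one is forced to keep $e_w$ with $w=p-q>0$ together. My plan is to organise the mixed elements $w=\alpha-t$, with $t=\delta-\alpha'$ running over the reflected initial segment $\delta-(A)_{<a'}$, according to the gaps of $A$: on each gap the mixed elements form a sumset of an admissible shift of $A$ with a reflected chunk of $A$ which embeds, via the Key Lemma, into a single shift $(A)_{>c}-c$ and is therefore WM. The genuine obstacle, and the heart of the lemma, is to make these gap-by-gap radii \emph{uniform}, i.e. to produce a single radius function valid simultaneously for the infinitely many mixed elements.

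The infinite union $\bigcup_{n\ge 0} n\cdot A_1$ is well-ordered, since the semigroup generated by the well-ordered set $A_1\subseteq\Gamma^+$ is well-ordered (as recalled after Hahn's proposition). With $\mu:=\min A_1>0$ every element of $n\cdot A_1$ is $\ge n\mu$, so below any fixed bound only finitely many $n$ contribute: the low part $\bigcup_{n\le N} n\cdot A_1$ is a finite union of admissible sets and is WM by (ii), while for the high part one bounds the far-out terms by a single radius using the decay in (E1); the shift condition is treated the same way after the successor reduction. Thus the elementary closure properties, the Key Lemma and heredity reduce the whole lemma to two uniformity statements — for the mixed sumset terms and for the high-order semigroup terms — and it is exactly this uniformity of the radius across infinitely many functions $e_w$ that I expect to be the main obstacle.
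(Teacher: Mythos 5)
Your preparatory toolkit is sound and in places sharper than what the paper records: that a singleton $\{e_\gamma\}$ with $\gamma>0$ is weakly monotonous by (E1), that weak monotonicity passes to subsets, finite unions and sumsets (the latter via $\infn{e_{\beta+\gamma}}{Y,\rho}\le\infn{e_\beta}{Y,\rho}\,\infn{e_\gamma}{Y,\rho}$), and your Key Lemma upgrading the shift condition from $\gamma\in A$ to arbitrary $\gamma\in\Gamma$ are all correct, and they do settle $A_1\cup A_2$ and heredity. But the proposal is not a proof. For $A+A$ you isolate the ``mixed'' terms $e_{(\alpha_1-a)+(\alpha_2-a')}$ with, say, $\alpha_1<a$, sketch a gap-by-gap decomposition, and then explicitly leave open the uniformity of a single radius over the infinitely many resulting chunks; since by your own reduction $A_1+A_2$, $n\cdot A_1$ and the infinite union all funnel through this one step, the argument is incomplete precisely at its load-bearing point. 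For comparison, the paper's proof is a two-line multiplicative estimate: it writes $\min B=\min B_1+\min B_2$ with $B_i\subset A_i$ and bounds $\infn{e_{\alpha-\min B}}{\rho}\le\infn{e_{\alpha_1-\min B_1}}{\rho_1}\infn{e_{\alpha_2-\min B_2}}{\rho_2}$, which only covers what you call the pure part ($\alpha_i\ge\min B_i$); for $\alpha_1<\min B_1$ the first factor involves a negative index and need not even be finite. So you have correctly located a subtlety that the published argument passes over in silence --- but flagging an obstacle is not the same as removing it, and in the concrete settings of sections \ref{section:z} and \ref{section:zlogz} the paper sidesteps the issue by verifying admissibility directly from \eqref{power-monotonie} and from condition $(\ast)$ of Lemma \ref{lem6.1}, both of which are manifestly stable under sums.

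Separately, your treatment of $\bigcup_{n\ge0}n\cdot A_1$ goes astray. Weak monotonicity is a comparison $\infn{e_\alpha}{Y,\rho(x)}\le|e_\alpha(x)|$ in which \emph{both} sides tend to zero for high-order elements, so ``bounding the far-out terms by a single radius using the decay in (E1)'' cannot by itself produce the inequality, and no low/high splitting is needed anyway. The observation you are missing --- and the one the paper uses --- is that the submultiplicative estimate already gives uniformity in $n$ for free: if $\rho_1$ is a radius for $A_1$, then for $\alpha=\alpha_1+\cdots+\alpha_n$ one has $\infn{e_\alpha}{Y,\rho_1(x)}\le\prod_i\infn{e_{\alpha_i}}{Y,\rho_1(x)}\le\prod_i|e_{\alpha_i}(x)|=|e_\alpha(x)|$, so the single radius $\rho_1$ serves simultaneously for every $n$ and hence for the whole union. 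I would encourage you either to close the mixed-term gap (for instance by exhibiting a radius for the shifted families $(A)_{>\delta-\alpha_1}-(\delta-\alpha_1)$ that is independent of $\alpha_1$), or to state the lemma under a hypothesis, such as $(\ast)$, that makes the shifts uniformly controllable.
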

\begin{proof}
First we show that $A_1\cup A_2$, $A_1+A_2$ and $n\cdot A_1$ are weakly monotonous.
Let $\rho_i,\;i=1,2$ be the radius for $A_i$ and $\rho(x)=\min\{\rho_1(x),\rho_2(x)\}$.
Then $\rho$ is a radius for $A_1\cup A_2$ and as well for $A_1+A_2$, because
for $\alpha_i\in A_i,$ \begin{align*}
\infn{e_{\alpha_1+\alpha_2}}{\rho(r)}
&\le \infn{e_{\alpha_1}}{\rho(r)}\infn{e_{\alpha_2}}{\rho(r)}
\le \infn{e_{\alpha_1}}{\rho_1(r)}\infn{e_{\alpha_2}}{\rho_2(r)}\\
&\le |e_{\alpha_1}(r)||e_{\alpha_2}(r)|=|e_{\alpha_1+\alpha_2}(r)|
\end{align*}
The same argument shows that $\rho_1$ is a radius for $n\cdot A_1$.

Now let $B\subset A:=A_1+A_2$. Then $B=B_1+B_2$ for some
$B_i\subset A_i, i=1,2$ and $\min B=\min B_1+ \min B_2$. Let $\alpha\in A$ with $\alpha=\alpha_1+\alpha_2, \alpha_i\in A_i$.
Let $\rho_{i}(r)$ be the radius for $\{e_{\alpha_i-\min B_i}\}$ and
$\rho=\min\{\rho_1,\rho_2\}$. The estimate
\[
\infn{e_{\alpha-\min B}}{\rho(r)}= \infn{e_{\alpha_1-\min B_1+\alpha_2-\min B_2}}{\rho(r)}
\le \infn{e_{\alpha_1-\min B_1}}{\rho_{1}(r)}\infn{e_{\alpha_2-\min B_2}}{\rho_{2}(r)}
\]
shows that $A_1+A_2$ is admissible. The other statements are proven similarly. \end{proof}

Let $f(z)=\sum_\alpha a_\alpha e_\alpha$ and $g(z)= \sum_\beta b_\beta e_\beta$ be Hahn holomorphic functions on $Y_f$ and $Y_g$ respectively.
First it is easy to see that $f+g$ is Hahn holomorphic on $Y=Y_f\cap Y_g$.
Since $\h{f}$ and $\h{g}$ are Hahn-series with support contained in $\Gamma^+_0$, also $\supp(\h{f}\cdot\h{g})\subset \Gamma^+_0$ for the multiplication as defined in \eqref{hahn-prod}. From Lemma \ref{lem-admis} we obtain that the support of  $\h{f}\cdot\h{g}$ is admissible.
We claim that
$
h(z)=f(z)\cdot g(z)
$
is represented by the  product of Hahn-series $\h{h}= \h{f}\cdot\h{g}$ on $Y_f\cap Y_g$.
Because $f$ and $g$ are normally convergent,
\begin{multline*}
\sum_\gamma \big\|\!\!\sum_{\alpha+\beta=\gamma} a_\alpha b_\beta \big\| \infn{e_\gamma}{}
\le \sum_\gamma \Big(\sum_{\alpha+\beta=\gamma} \|a_\alpha\| \|b_\beta\| \Big) \infn{e_\gamma}{}\\
\le
\big(\sum_\alpha \|a_\alpha\| \infn{e_\alpha}{}\big) \big(\sum_\beta \|b_\beta\| \infn{e_\beta}{}\big)
\end{multline*}
so that the series $\h{f}\cdot\h{g}$ is normally convergent in $Y_f\cap Y_g$.
Thus the series $\h{f}\cdot\h{g}$
defines a Hahn holomorphic function on $Y$ with values in ${\mathcal R}$ which equals $h(z)$.

Altogether we have found

\begin{proposition}\label{ringhom} Let $\mathcal{R}$ be a Banach algebra.
The Hahn holomorphic functions with values in $\mathcal{R}$ on $Y$ form a ring under usual addition and multiplication,
and the map $\psi_{\mathcal R}:f\mapsto \h{f}$ is a ring isomorphism onto its image in $\mathcal{R}[[e_\gamma]]$.
\end{proposition}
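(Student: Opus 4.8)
The plan is to treat the two assertions separately: first that the Hahn holomorphic $\mathcal{R}$-valued functions form a ring, and then that $\psi_{\mathcal R}$ is an injective ring homomorphism into $\mathcal{R}[[e_\Gamma]]$ (equivalently, a ring isomorphism onto its image). Since addition and multiplication are the pointwise operations on $\mathcal{R}$-valued functions and $\mathcal{R}$ is a Banach algebra, all the ring axioms --- commutativity of addition, associativity, and the distributive laws --- hold pointwise and therefore for the functions themselves; the constant function with value $\mathbf{1}\in\mathcal{R}$, whose Hahn series is $\mathbf{1}\,e_0$, is Hahn holomorphic and serves as the multiplicative identity. Thus the only substantive point in the first assertion is \emph{closure} under $+$ and $\cdot$, and this is precisely what the two paragraphs preceding the proposition establish: $f+g$ is Hahn holomorphic with series $\h{f}+\h{g}$, while the pointwise product $fg$ is Hahn holomorphic with series $\h{f}\cdot\h{g}$, the admissibility of $\supp(\h{f}\cdot\h{g})$ coming from Lemma~\ref{lem-admis} and the normal convergence of the Cauchy product from the displayed estimate.

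For the homomorphism property I would simply read off the two identities $\psi_{\mathcal R}(f+g)=\h{f}+\h{g}$ and $\psi_{\mathcal R}(fg)=\h{f}\cdot\h{g}$ from the same computations: series addition is coefficientwise and matches the pointwise sum, and the product formula \eqref{hahn-prod} in $\mathcal{R}[[e_\Gamma]]$ is exactly the regrouping by powers $e_\gamma$ of the double series $\sum_{\alpha,\beta} a_\alpha b_\beta\, e_\alpha e_\beta$ for $f(z)g(z)$. Together with $\psi_{\mathcal R}(1)=\mathbf{1}\,e_0$ this shows that $\psi_{\mathcal R}$ respects both operations and the unit, hence is a ring homomorphism.

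It then remains to check that $\psi_{\mathcal R}$ is well defined and injective, and here Proposition~\ref{prop1.3} does the work. For well-definedness: if a single Hahn holomorphic $f$ were represented near $0$ by two normally convergent series $\sum a_\gamma e_\gamma$ and $\sum b_\gamma e_\gamma$, their difference would represent the zero function, so Proposition~\ref{prop1.3} forces $a_\gamma=b_\gamma$ for all $\gamma$ and $\h{f}$ is unambiguous. For injectivity: if $\psi_{\mathcal R}(f)=\psi_{\mathcal R}(g)$ then $\h{f-g}=\h{f}-\h{g}=0$, so $f-g$ has vanishing Hahn series and hence vanishes on $Y\cap D_\delta$; its zero set accumulates at $0$, and Proposition~\ref{prop1.3} gives $f\equiv g$. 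An injective ring homomorphism is an isomorphism onto its image, which completes the argument.

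Most of the analytic effort --- admissibility of the product support and normal convergence of the Cauchy product --- has already been carried out before the statement, so I do not expect a serious obstacle. The one step that genuinely requires care is the rearrangement of $\sum_{\alpha,\beta}a_\alpha b_\beta\, e_\alpha e_\beta$ into the Cauchy-ordered single sum $\sum_\gamma c_\gamma e_\gamma$: this interchange of summation is legitimate only because the double series converges absolutely and uniformly, which is exactly where the normal-convergence hypothesis enters and where the definition of multiplication in $\mathcal{R}[[e_\Gamma]]$ must be matched to the pointwise product of functions. The injectivity statement is the other place where something nontrivial, namely Proposition~\ref{prop1.3}, is invoked rather than verified by hand.
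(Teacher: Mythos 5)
Your proposal is correct and follows essentially the same route as the paper: the paper's proof of this proposition consists precisely of the two preceding paragraphs (closure of sums, normal convergence of the Cauchy product, admissibility of the product support via Lemma~\ref{lem-admis}), with well-definedness and injectivity of $\psi_{\mathcal R}$ resting on Proposition~\ref{prop1.3} exactly as you invoke it. Your write-up merely makes explicit the routine points (pointwise ring axioms, the unit, and the rearrangement justified by absolute convergence) that the paper leaves implicit.
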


\begin{corollary}\label{cor1.8}
The ring of Hahn holomorphic functions  on  $Y$ with values in an integral domain ${\mathcal R}$ is an integral domain.
\end{corollary}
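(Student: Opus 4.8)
The plan is to read off the statement from Proposition \ref{ringhom}. Since $\psi_{\mathcal R}\colon f\mapsto \h{f}$ is a ring isomorphism of the Hahn holomorphic functions onto a subring of $\mathcal{R}[[e_\Gamma]]$ carrying the constant function $\mathbf 1$ to the unit $\mathbf 1\, e_0$, and since any subring of an integral domain is again an integral domain, it suffices to verify that $\mathcal{R}[[e_\Gamma]]$ itself is an integral domain whenever $\mathcal R$ is. Commutativity of $\mathcal{R}[[e_\Gamma]]$ is inherited from the commutativity of $\mathcal R$ and the fact that $\Gamma$ is abelian (inspect the coefficients $c_\gamma=\sum_{\alpha+\beta=\gamma}a_\alpha b_\beta$ in \eqref{hahn-prod}), and the unit is $\mathbf 1\, e_0$, so the whole corollary reduces to the purely algebraic claim that $\mathcal{R}[[e_\Gamma]]$ has no zero divisors. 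In particular no analytic input, such as normal convergence or admissibility of supports, is needed here.

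First I would take two nonzero Hahn series $\h{f}=\sum_\alpha a_\alpha e_\alpha$ and $\h{g}=\sum_\beta b_\beta e_\beta$. Because their supports are well-ordered subsets of $\Gamma$ by the definition of a Hahn series, the minima $\alpha_0=\min\supp(\h{f})$ and $\beta_0=\min\supp(\h{g})$ exist. I then claim that the coefficient $c_{\alpha_0+\beta_0}$ of $e_{\alpha_0+\beta_0}$ in the product $\h{f}\cdot\h{g}$, as given by \eqref{hahn-prod}, is exactly the single term $a_{\alpha_0}b_{\beta_0}$, which is nonzero since $\mathcal R$ has no zero divisors; this immediately yields $\h{f}\cdot\h{g}\neq 0$.

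The key step, and the only place where the order structure of $\Gamma$ really enters, is to show that $(\alpha_0,\beta_0)$ is the unique pair $(\alpha,\beta)$ with $\alpha\in\supp(\h{f})$, $\beta\in\supp(\h{g})$ and $\alpha+\beta=\alpha_0+\beta_0$. This is the translation-invariance and cancellation argument in the linearly ordered abelian group $\Gamma$: any such $\alpha,\beta$ satisfy $\alpha\ge\alpha_0$ and $\beta\ge\beta_0$, so that adding $\beta$ to the first inequality and $\alpha_0$ to the second gives $\alpha+\beta\ge\alpha_0+\beta\ge\alpha_0+\beta_0$. Since the two outer quantities coincide, both inequalities are equalities, and cancelling in the group forces $\alpha=\alpha_0$ and $\beta=\beta_0$. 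Hence $c_{\alpha_0+\beta_0}=a_{\alpha_0}b_{\beta_0}\neq 0$.

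I expect the main obstacle to be essentially bookkeeping rather than a genuine difficulty: one must be careful to invoke well-ordering to guarantee that the minima $\alpha_0,\beta_0$ exist, and to argue the single-term identification of $c_{\alpha_0+\beta_0}$ cleanly from the monotonicity of the group order. Everything else is transported through the isomorphism $\psi_{\mathcal R}$ of Proposition \ref{ringhom}, so the corollary follows once this algebraic core is in place.
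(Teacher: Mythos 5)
Your proposal is correct and follows essentially the same route as the paper: reduce via the isomorphism $\psi_{\mathcal R}$ of Proposition \ref{ringhom} to showing $\mathcal{R}[[e_\Gamma]]$ has no zero divisors, and verify this by inspecting the coefficient of $e_{\alpha_0+\beta_0}$ where $\alpha_0,\beta_0$ are the minima of the two supports. You merely spell out the cancellation argument in the ordered group that the paper leaves implicit.
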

\begin{proof}
By looking at the coefficient $c_{\gamma}$ with $\gamma=\min\supp\:\h{f}$ in \eqref{hahn-prod}, we observe that ${\mathcal R}[[e_\Gamma]]$ is an integral domain, if ${\mathcal R}$ is an integral domain. Because $\psi_{\mathcal R}$ is an isomorphism, the Hahn holomorphic functions must be an integral domain.
\end{proof}

\begin{theorem}\label{dasInverse}
Let $\mathcal{R}$ be a Banach algebra and suppose $f: Y_0 \to \mathcal{R}$ is Hahn holomorphic
and $f(z)$ is invertible for all  $z \in Y_0$.
Then $f(z)^{-1}$ is also Hahn holomorphic on $Y_0$.
\end{theorem}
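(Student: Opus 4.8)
The plan is to reduce the statement to inverting a series of the form $\mathbf 1+\h{g}$ with $\supp\h{g}\subset\Gamma^+$, and to produce the inverse as a normally convergent Neumann series. First I would note that $f(0)=a_0$ is invertible, so $a_0\neq 0$, and since $\supp\h{f}\subset\Gamma_0^+$ this forces $\min\supp\h{f}=0$. Writing $\h{f}=a_0+\h{r}$ with $\supp\h{r}\subset\Gamma^+$ and putting $\h{g}:=a_0^{-1}\h{r}=\sum_{\gamma>0}b_\gamma e_\gamma$, $b_\gamma:=a_0^{-1}a_\gamma$, the series $\h{g}$ is again normally convergent with admissible support contained in $\Gamma^+$, and the associated function satisfies $g(0)=0$ by (E1). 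As $a_0^{-1}$ defines the constant Hahn holomorphic function $a_0^{-1}e_0$, Proposition \ref{ringhom} reduces everything to showing that $\mathbf 1+g$ has a Hahn holomorphic inverse: then $f^{-1}=(\mathbf 1+g)^{-1}a_0^{-1}$ is Hahn holomorphic as a product.

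Next I would form the formal inverse $\h{u}:=\sum_{k=0}^\infty(-1)^k\h{g}^k$. Since $\supp\h{g}\subset\Gamma^+$ is well-ordered, the semigroup it generates is well-ordered, so $\h{u}$ is a bona fide Hahn series satisfying $\h{u}\,(\mathbf 1+\h{g})=(\mathbf 1+\h{g})\,\h{u}=\mathbf 1$. The main work is to show that $\h{u}$ converges normally on a small enough disc. Consider $S(\delta):=\sum_{\gamma>0}\|b_\gamma\|\,\infn{e_\gamma}{Y,\delta}$. Normal convergence of $\h{g}$ makes $S(\eps)$ finite, while (E1) gives $\infn{e_\gamma}{Y,\delta}\to 0$ as $\delta\to 0$ for each fixed $\gamma$; dominated (or monotone) convergence for the sum, with summable dominant $\|b_\gamma\|\,\infn{e_\gamma}{Y,\eps}$, then yields $S(\delta)\to 0$, so I may fix $\delta>0$ with $q:=S(\delta)<1$. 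Using submultiplicativity of the norm together with $\infn{e_{\gamma_1+\dots+\gamma_k}}{Y,\delta}\le\prod_i\infn{e_{\gamma_i}}{Y,\delta}$ --- the same estimate as in the computation preceding Proposition \ref{ringhom} --- I obtain $\sum_\gamma\|(\h{g}^k)_\gamma\|\,\infn{e_\gamma}{Y,\delta}\le q^k$, and summing over $k$ bounds the total norm of $\h{u}$ by $(1-q)^{-1}<\infty$.

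To finish, I would check that $\supp\h{u}$ is admissible: it is contained in $\bigcup_{n=0}^\infty n\cdot\supp\h{g}$, which is admissible by Lemma \ref{lem-admis}, and admissibility passes to subsets because both defining conditions are universally quantified over the index set. Hence $\h{u}$ defines a Hahn holomorphic function $u$. Applying the ring isomorphism $\psi_{\mathcal R}$ of Proposition \ref{ringhom} to $\h{u}\,(\mathbf 1+\h{g})=\mathbf 1$ shows $u(z)(\mathbf 1+g(z))=\mathbf 1$ on $Y\cap D_\delta$; since $\mathbf 1+g(z)$ is invertible there, $u(z)=(\mathbf 1+g(z))^{-1}$, and therefore $f^{-1}=u\,a_0^{-1}$ is Hahn holomorphic on $Y_0$.

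I expect the main obstacle to be the normal-convergence step, specifically the passage from the pointwise decay in (E1) to the uniform smallness $q<1$ of the norm-weighted tail $S(\delta)$; once that is in hand, the identity for the inverse and the admissibility bookkeeping follow directly from the ring structure of Proposition \ref{ringhom}, the admissibility calculus of Lemma \ref{lem-admis}, and the classical well-orderedness fact recalled in the text.
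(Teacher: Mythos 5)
Your proof is correct and follows essentially the same route as the paper: reduce to inverting $\mathbf{1}$ plus a series supported in $\Gamma^+$, make the norm-weighted tail small near $0$, sum the Neumann series with a geometric bound, and get admissibility of the support from Lemma \ref{lem-admis}. The only (harmless) differences are bookkeeping: you obtain $S(\delta)\to 0$ by dominated convergence where the paper factors out $e_{\mathfrak m}$ via Lemma \ref{lem2.6}, and you justify the rearrangement through the ring isomorphism of Proposition \ref{ringhom} where the paper compares partial sums over finite subsets of the support.
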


\begin{proof}
Since $1/f$ is holomorphic in $Y$ we only have to show
that there is a Hahn series for $f(z)^{-1}$ that converges normally on some $Y_0 \cap D_\eps$.  Since $f(z)^{-1}= f(0)^{-1}\left( f(z) f(0)^{-1} \right)^{-1}$
we can assume without loss of generality that $f(0)=\Id$.
Thus we can write
$f(z) = \Id-h(z)$, where $\mathfrak{m}:=\min\supp\:(\h{h})>0$.
By assumption the series $\h{h} := \sum_{\alpha\in\Gamma}  a_\alpha  e_\alpha$ defining $h(z)$ converges normally on the  set $Y_0 \cap D_{\delta_0}$ for some $\delta_0>0$. The function $ \tilde{h}$ defined by
\[
\tilde{h}(t)=\sum_{\alpha\in\Gamma} \| a_\alpha \| \infn{e_\alpha}{Y_0,t}
\le \infn{e_{\mathfrak{m}}}{Y_0,t} \sum_{\alpha\ge \mathfrak{m}} \| a_\alpha\| \infn{e_{\alpha- \mathfrak{m}}}{Y_0,t}
\]
converges to $0$ for $t\to 0$  due to (E1) and Lemma \ref{lem2.6}.
Therefore we can  choose $\delta>0$ so small that $ \tilde h:=\tilde{h}(\delta) < 1/2$.
Because $ |h(z)| \leq \tilde h$ for $z\in Y_0 \cap D_\delta$, the geometric series
\[
f(z)^{-1} = \sum_{n=0}^\infty h(z)^n
\]
then converges normally on $Y_0 \cap D_\delta$ . But we also know that $\h{f}$ is invertible:
\[
\h{f}^{-1}= \sum_{n=0}^\infty \h{h}^n=:\sum_{\alpha\in\mathcal{S}}  b_\alpha   e_\alpha,
\quad\text{with}\quad \supp(\h{f}^{-1})\subset
\mathcal{S}:=\bigcup_{n\ge 0}\supp(\h{h}^{n}). 
\]
From Lemma \ref{lem-admis} we obtain that $\mathcal{S}$ is admissible.
It remains to show that $\sum_{\alpha\in\mathcal{S}}  b_\alpha   e_\alpha(z)$ is normally convergent on $Y_0 \cap D_\delta$ and represents $f(z)^{-1}$.
Note that if $\sum_{n=0}^N \h{h}^n= \sum_{\alpha\in\mathcal{S}} c_\alpha(N) e_\alpha$ then
\[
\sum_{\alpha\in\mathcal{S}} \| c_\alpha(N) \| \infn{e_\alpha}{} \leq \sum_{n=0}^N {\tilde{h}}^n\quad\text{in}\quad Y_0 \cap D_\delta
\]
as a simple consequence of the triangle inequality.
For every fixed finite set $A \subset \mathcal{S}$ there exists an $N_{\!A}>0$ such that for all $N \geq N_{\!A}$
\[
\h{f}^{-1} - \sum_{n=0}^{N} \h{h}^n=\sum_{\alpha\in\mathcal{S}\setminus A}(b_\alpha-c_\alpha(N)) e_\alpha\label{diff01}
\]
has support away from $A$.
In particular $c_\alpha(N)=b_\alpha$ for $\alpha\in A$ and $N\ge N_{\!A}$. Therefore for $N>N_{\!A}$
\[  \sum_{\alpha \in A}\| b_\alpha \| \infn{e_\alpha}{}
\leq  \sum_{\alpha\in \mathcal{S}} \|c_\alpha(N)\| \infn{e_\alpha}{}  \leq  \sum_{n=0}^{N} \tilde h^n<\frac{1}{1-\tilde{h}},
\] and this proves convergence since this bound is independent of $A$. 
In particular $\sum_{\alpha\in\mathcal{S}}  b_\alpha   e_\alpha(z)$ converges absolutely in $\mathcal R$, hence it converges and the value does not depend on the order of summation. After reordering,
\[
\sum_{\alpha\in\mathcal{S}}  b_\alpha   e_\alpha(z)=\sum_{n=0}^\infty h(z)^n=f(z)^{-1}.
\]
\end{proof}

Because of Lemma \ref{lem2.6}, every complex valued Hahn holomorphic $f$ which is not identically $0$ can be inverted away from its zeros: Let $\mathfrak{m}:=\min\supp(\h{f})\ge 0$, then
\[
f^{-1}(z)=a_{\mathfrak{m}}^{-1}e_{-\mathfrak{m}}(z)\sum_{n=0}^\infty\big(1-a_{\mathfrak{m}}^{-1}e_{-\mathfrak{m}}(z)f(z)\big)^n
\]

\begin{theorem}\label{holomsatz}
Suppose that $f: Y_0 \to \Complex$ is a Hahn holomorphic function with Hahn series
$\h{f}$. Suppose that $U$ is an open neighbourhood of $f(0)$ and $h: U \to \Complex$
is holomorphic. Then $h \circ f$ is Hahn holomorphic on its domain.
\end{theorem}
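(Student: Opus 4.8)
The plan is to reduce the statement to the structure of the geometric-series argument used in Theorem \ref{dasInverse}, with the Taylor expansion of $h$ playing the role of the geometric series. First I would expand $h$ about $w_0:=f(0)$: since $U$ is open there is an $R>0$ with $\{w:|w-w_0|<R\}\subset U$, and holomorphy of $h$ gives a convergent power series $h(w)=\sum_{n=0}^\infty c_n (w-w_0)^n$ for $|w-w_0|<R$. Set $g:=f-w_0$. Then $g$ is Hahn holomorphic with $g(0)=0$, so that $\mathfrak{m}:=\min\supp(\mathfrak{g})>0$ by (E1), and formally $h\circ f=\sum_{n=0}^\infty c_n\,g^n$.

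Next I would verify that the formal series $\mathfrak{H}:=\sum_{n=0}^\infty c_n\,\mathfrak{g}^{\,n}$ is a genuine Hahn series with admissible support. By Proposition \ref{ringhom} each power $g^n$ is Hahn holomorphic, and from \eqref{hahn-prod} its support satisfies $\supp(\mathfrak{g}^{\,n})\subset n\cdot\supp(\mathfrak{g})\subset[n\mathfrak{m},\infty)$. Since $\mathfrak{m}>0$ and $\Gamma$ is linearly ordered, for each fixed $\alpha\in\Gamma$ only finitely many $n$ can have $\alpha\in\supp(\mathfrak{g}^{\,n})$, so the $e_\alpha$-coefficient of $\mathfrak{H}$ is a finite sum and $\mathfrak{H}$ is a well-defined Hahn series with $\supp(\mathfrak{H})\subset\mathcal{S}:=\bigcup_{n\ge0}\supp(\mathfrak{g}^{\,n})$. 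As in Theorem \ref{dasInverse}, $\mathcal{S}$ is contained in the admissible set $\bigcup_{n\ge0}n\cdot\supp(\mathfrak{g})$ of Lemma \ref{lem-admis}, and since subsets of admissible sets are admissible (both defining conditions pass to sub-families), $\mathcal{S}$ is admissible.

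The main work is normal convergence of $\mathfrak{H}$ on some $Y_0\cap D_\delta$, and here I would follow the proof of Theorem \ref{dasInverse} almost verbatim. As in that proof, the function $\tilde g(t):=\sum_{\gamma}\|a_\gamma\|\,\infn{e_\gamma}{Y_0,t}$ (the $a_\gamma$ now being the coefficients of $\mathfrak{g}$) tends to $0$ as $t\to0$ by (E1) and Lemma \ref{lem2.6}, so I may fix $\delta>0$ with $\tilde g:=\tilde g(\delta)<R$. Submultiplicativity of the sup-norm, $\infn{e_{\gamma_1+\cdots+\gamma_n}}{Y,\eps}\le\infn{e_{\gamma_1}}{Y,\eps}\cdots\infn{e_{\gamma_n}}{Y,\eps}$, yields the weighted coefficient bound $\sum_\alpha|[\mathfrak{g}^{\,n}]_\alpha|\,\infn{e_\alpha}{Y,\eps}\le\tilde g^{\,n}$, where $[\mathfrak{g}^{\,n}]_\alpha$ denotes the $e_\alpha$-coefficient of $\mathfrak{g}^{\,n}$. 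Hence the partial sums $\sum_{n=0}^N c_n\mathfrak{g}^{\,n}$ have norm-weighted coefficient sums bounded by $\sum_{n=0}^N|c_n|\tilde g^{\,n}\le\sum_{n=0}^\infty|c_n|\tilde g^{\,n}<\infty$, a bound independent of $N$. Because these partial sums converge to $\mathfrak{H}$ in the ring sense, on any finite $A\subset\mathcal{S}$ the coefficients stabilize, and exactly the stabilization argument of Theorem \ref{dasInverse} gives $\sum_{\alpha\in A}|b_\alpha|\,\infn{e_\alpha}{Y,\eps}\le\sum_{n=0}^\infty|c_n|\tilde g^{\,n}$ uniformly in $A$, which is normal convergence.

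Finally I would identify the resulting Hahn holomorphic function with $h\circ f$. For $z\in Y_0\cap D_\delta$ one has $|g(z)|\le\tilde g(\delta)<R$, so $h(f(z))=\sum_n c_n g(z)^n$ by the convergent Taylor series; on the other hand normal convergence makes the double sum over $n$ and over $\alpha\in\mathcal{S}$ absolutely convergent, so reordering gives $\mathfrak{H}(z)=\sum_n c_n g(z)^n=h(f(z))$. I expect the only delicate point to be the uniform control of the weighted coefficient sums of the collected series $\mathfrak{H}$, rather than of the individual powers, but this is precisely what the stabilization-on-finite-sets device from Theorem \ref{dasInverse} resolves.
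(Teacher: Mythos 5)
Your proposal is correct and follows essentially the same route as the paper, which likewise reduces to the Taylor expansion of $h$ at $f(0)$ and then invokes the normal-convergence and stabilization argument from Theorem \ref{dasInverse}; your write-up simply carries out in detail what the paper compresses into ``a similar argument as in the proof of Theorem \ref{dasInverse}''. One small caveat: the finiteness of the set of $n$ contributing to a fixed coefficient of $\sum_n c_n\mathfrak{g}^{\,n}$ does not follow merely from $\mathfrak{m}>0$ and linear order when $\Gamma$ is non-Archimedean (e.g.\ $\Reell^2$ with the lexicographic order, as in Section \ref{section:zlogz}); what is really needed is the standard fact about well-ordered subsets of $\Gamma^+$ (Passman, Lemma 2.10) that the paper already uses to justify the Neumann series.
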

\begin{proof}
Since holomorphicity away from zero is obvious it is enough to show that $h \circ f$ has a normally convergent expansion into a Hahn series.
Replacing $f(z)$ by $f(z)-f(0)$ and $h(z)$ by $h(z-f(0))$ we can assume without loss of generality that
$f(0)=0$ and thus $\supp(\h{f}) \subset \Gamma^+$.
Since $h$ is holomorphic near $f(0)$ it has a uniformly and absolutely convergent expansion
$$
h(z) = \sum_{k=0}^\infty a_k (z-f(0))^k.
$$
Thus,
$$
h \circ f(z) =  \sum_{k=0}^\infty a_k (f(z))^k.
$$
Note that $\sum_{k=0}^\infty a_k \:\h{f}^k$ is a Hahn series. A similar argument
as in the proof of Theorem \ref{dasInverse} shows that this Hahn series is normally convergent
and represents $h \circ f(z)$.
\end{proof}

\section{Hahn meromorphic functions} \label{section:mero}

\begin{definition}
A meromorphic function $h: Y \to \Complex$ is called \emph{Hahn meromorphic}
if $h$ is represented by a Hahn series $\h{h}$ in $Y\cap D_\eps$ for some $\eps>0$
and there exist Hahn holomorphic functions $f$, $g\not\equiv 0$ on  $Y_0\cap D_\eps$ such that
$\h{h}\cdot \h{g}=\h{f}$.
\end{definition}

In this sense a Hahn meromorphic function can be written as a quotient $h=f/g$ of Hahn holomorphic functions in a neighborhood of $0$.

\begin{rem}
Since $\Complex$-valued Hahn holomorphic functions form an integral domain, the Hahn meromorphic functions
form a field.
More generally let $\mathcal{R}$ be a (commutative) integral domain.
From Corollary \ref{cor1.8} we know that Hahn holomorphic functions with coefficients in $\mathcal{R}$ are a commutative integral domain, so that their quotient field is defined. Furthermore, the map $f\mapsto \h{f}$ induces an injective morphism from the quotient field of Hahn holomorphic functions to the quotient field $\mathcal{R}((e_\Gamma))$ of Hahn series $\mathcal{R}[[e_\Gamma]]$. 
Note that $\mathcal{R}((e_\Gamma))=\mathcal{R}[[e_\Gamma]]$, if $\mathcal{R}$ is a field.
\end{rem}

An important difference with usual meromorphic functions is that Hahn meromorphic functions
may have infinitely many negative exponents. For example the function
\[
f(x) = \sum_{n=1}^\infty \frac{1}{n^2} z^{1-1/n}
\]
is Hahn holomorphic and therefore \[
\sum_{n=1}^\infty \frac{1}{n^2} z^{-1/n-1}=\frac{f(z)}{z^2}
\]
is Hahn-meromorphic.

It follows from our analysis for Hahn holomorphic functions that every $\Complex$-valued Hahn meromorphic function $h$ can be written as
\[
h(z)=e_{\min\supp{\:\h{h}}}(z) f(z),
\]
where $f$ is Hahn holomorphic. Moreover, if $h \neq 0$ then $f(0)\neq 0$.
In particular this implies that Hahn meromorphic functions which are bounded
on $(0,\delta)$ are Hahn holomorphic in some neighborhood of $0$.

We can also define Hahn meromorphic functions with values in a Banach algebra:
\begin{definition}
Let $\mathcal{R}$ be a Banach algebra.
A function $h:Y\to \mathcal{R}$ is called \emph{Hahn meromorphic} if it is meromorphic on $Y$ and there exists
a $\delta>0$ and a non-zero Hahn holomorphic function $f$ on $Y_0 \cap D_\delta$ such that
$f(z) h(z)$ is a Hahn holomorphic function on $Y_0\cap D_\delta$ with values in $\mathcal{R}$.
\end{definition}

\begin{rem}
Let $R>0$ and $\sigma>0$. If there exists one non-zero Hahn holomorphic function on $Y \cap D_R^{[\sigma]}$ that vanishes with
positive order at $0$,
then one can use 
the Weierstrass product theorem together with Theorem \ref{holomsatz} to show that
the set of complex valued Hahn meromorphic functions on $Y \cap D_R^{[\sigma]}$ can be identified with the quotient field
of the division ring of Hahn holomorphic functions on $Y \cap D_R^{[\sigma]}$.
\end{rem}

\section{A Hahn holomorphic  Fredholm theorem} \label{section:fred}
Let $\mathcal{H}$ be a complex Hilbert space and denote by $\mathcal{K}(\mathcal{H})$ the space of
compact operators on $\mathcal{H}$. 
\begin{theorem}\label{fredholm-thm}
Suppose $Y_0 \subset \ZZ$ is connected and
let $f: Y \to \mathcal{K}(\mathcal{H})$ be either Hahn holomorphic, or Hahn meromorphic such that
all coefficients of $e_\gamma$ with $\gamma<0$ and all Laurent coefficients in the principal part away from the point $z=0$
have range in a common finite dimensional subspace $\mathcal{H}_0 \subset \mathcal{H}$.
\vspace{-\parskip}

Then either
$(\Id-f(z))\in\mathcal{B}(\mathcal{H})$ is invertible nowhere in $Y_0$ or its inverse
$
(\Id-f(z))^{-1}
$
exists everywhere except at a discrete set of points in $Y_0$ and defines a  Hahn meromorphic
function. Moreover, in the Hahn series of $(\Id-f(z))^{-1}$,  the coefficients   of $e_\gamma$ with $\gamma<0$ are finite rank operators, and the coefficients in the principal part 
of its Laurent expansion  away from $z=0$ are finite rank operators, too.
\end{theorem}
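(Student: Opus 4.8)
The plan is to transplant the classical proof of the analytic Fredholm theorem, replacing ``holomorphic'' by ``Hahn holomorphic'' throughout and using Theorem \ref{dasInverse} (inversion of invertible Hahn holomorphic functions) and Proposition \ref{prop1.3} (the identity theorem) in place of the Neumann series and the ordinary identity theorem. Away from $z=0$ the family $f$ is an ordinary holomorphic, respectively meromorphic, operator function on the open set $Y$, so there the usual analytic and meromorphic Fredholm theorems apply locally; the genuinely new content is the behaviour at the puncture $0$, and the local and global pictures will be glued using the connectedness of $Y_0$.

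First I would produce a \emph{finite-rank splitting} near $0$. Collecting the finitely many Laurent principal parts and all the negative-$\gamma$ coefficients (finite rank with range in $\mathcal{H}_0$ by hypothesis) into one piece, I may assume $f$ is Hahn holomorphic with compact coefficients $a_\gamma$. Using normal convergence I fix a finite $A\subset\supp(\h f)$ carrying all but an arbitrarily small tail of $\sum_\gamma\|a_\gamma\|\,\infn{e_\gamma}{Y,\eps}$, and approximate each $a_\gamma$, $\gamma\in A$, in norm by a finite-rank $F_\gamma$. Choosing a single finite-dimensional $\mathcal{H}_1$ containing $\mathcal{H}_0$ together with the ranges \emph{and} co-ranges of all $F_\gamma$, I obtain $f=f_1+f_2$, where $f_1=\sum_{\gamma\in A}F_\gamma e_\gamma$ is finite-rank-valued with range and co-range in the \emph{fixed} space $\mathcal{H}_1$, and $\infn{f_2}{Y,\eps}<1$. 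By Theorem \ref{dasInverse} the factor $(\Id-f_2)^{-1}$ is Hahn holomorphic, and the factorisation $\Id-f=(\Id-f_2)(\Id-g)$ with $g:=(\Id-f_2)^{-1}f_1$ reduces the problem to inverting $\Id-g$, a finite-rank Hahn holomorphic family whose co-range still lies in $\mathcal{H}_1$.

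Next comes the \emph{finite-dimensional reduction}. Writing $f_1=\iota\,\Phi\,\pi$ through a fixed orthonormal basis of $\mathcal{H}_1$ turns the compression of $\Id-g$ to $\mathcal{H}_1$ into an $N\times N$ matrix $M(z)=\Id_N-\pi(\Id-f_2(z))^{-1}\iota\,\Phi(z)$ with Hahn holomorphic entries, and $\Id-g(z)$ is invertible precisely when $d(z):=\det M(z)\neq 0$. Since the Hahn holomorphic functions form a ring (Proposition \ref{ringhom}), $d$ is Hahn holomorphic, so by Proposition \ref{prop1.3} either $d\equiv 0$ or its zeros are isolated. In the first case $\Id-f$ is invertible nowhere near $0$, and by the classical dichotomy together with the connectedness of $Y_0$ it is invertible nowhere on $Y_0$. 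In the second case Cramer's rule gives $M^{-1}=\operatorname{adj}(M)/d$ with Hahn meromorphic entries, and the Sherman--Morrison--Woodbury identity expresses $(\Id-g)^{-1}$ as $\Id$ plus a finite-rank term assembled from $(\Id-f_2)^{-1}$, $\Phi$ and $M^{-1}$, exhibiting it---hence $(\Id-f)^{-1}=(\Id-g)^{-1}(\Id-f_2)^{-1}$---as Hahn meromorphic off the discrete set $\{d=0\}$; gluing with the classical meromorphic Fredholm theorem on $Y$ produces a Hahn meromorphic inverse on $Y_0$ away from a discrete set.

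Finally, for the finite-rank statement I would isolate $R:=(\Id-f)^{-1}-(\Id-f_2)^{-1}$. Since $g$ commutes with $(\Id-g)^{-1}$ one has $R^\ast=(\Id-f_2^\ast)^{-1}(\Id-g^\ast)^{-1}g^\ast$, whose range lies inside the co-range of $f_1$, i.e. inside the \emph{fixed} space $\mathcal{H}_1$; hence $R(z)=\sum_{k=1}^{N} r_k(z)\otimes u_k$ for a fixed basis $u_k$ of $\mathcal{H}_1$ and Hahn meromorphic $\mathcal{H}$-valued $r_k$. Consequently every coefficient of the Hahn series of $R$, and every coefficient of its Laurent principal parts away from $0$, has rank at most $N$. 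As $(\Id-f_2)^{-1}$ is Hahn holomorphic with nonnegative exponents and no poles in $Y$, it contributes nothing to the negative exponents or to the principal parts of $(\Id-f)^{-1}$, which therefore coincide with those of $R$ and are finite rank. I expect precisely this last point to be the main obstacle: unlike the integer-exponent classical case, real exponents allow infinitely many support elements in a bounded interval, so a direct coefficient computation would yield only compactness; arranging a \emph{fixed} finite-dimensional co-range for the correction term is exactly the device that forces genuine finite rank.
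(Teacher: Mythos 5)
Your overall route is the paper's: split $f$ near $0$ into a finite-rank-valued piece plus a piece of norm less than $1$, invert the small piece by Theorem \ref{dasInverse}, reduce invertibility of the remaining factor to the non-vanishing of a Hahn meromorphic determinant of a finite matrix, apply Proposition \ref{prop1.3} to the determinant to get the dichotomy, and glue with the classical meromorphic Fredholm theorem on $Y$ using connectedness. (The paper approximates only the constant coefficient by a single finite rank operator $A$ and carries the whole singular part as a Hahn meromorphic $B(z)$ with range in $\mathcal{H}_0$; your approximation of the finitely many coefficients $a_\gamma$, $\gamma\in A$, is an equivalent variant.) Up to and including the Cramer's rule step your argument matches the paper's.

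The final paragraph, however, contains a genuine error at exactly the point you flag as the crux. You claim that $R^\ast=(\Id-f_2^\ast)^{-1}(\Id-g^\ast)^{-1}g^\ast$ has range inside the co-range of $f_1$, hence inside the fixed space $\mathcal{H}_1$. In this composition $g^\ast$ is applied \emph{first}, so $\mathrm{rg}(R^\ast)=(\Id-f_2^\ast)^{-1}(\Id-g^\ast)^{-1}\bigl(\mathrm{rg}(g^\ast)\bigr)$: an at most $N$-dimensional subspace that \emph{moves with} $z$ and is not contained in $\mathcal{H}_1$. Hence the representation $R(z)=\sum_k r_k(z)\otimes u_k$ with a fixed basis $u_k$ of $\mathcal{H}_1$ (equivalently $R(z)=R(z)P_{\mathcal{H}_1}$) fails, and with it your deduction that the coefficients have rank at most $N$. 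A related defect: the hypothesis controls only the \emph{ranges} of the negative coefficients, so the singular part cannot be written as $\iota\Phi\pi$ with a fixed $\pi$; a two-sided factorization through $\Complex^N$ is unavailable for it. Both points are repaired by arguing one-sidedly with the range, as the paper does: since $\mathrm{rg}(g)\subset V$, formula \eqref{fredholm-g2} exhibits $(\Id-g)^{-1}-\Id$ with range in the \emph{fixed} space $V$, so the correction term factors as $(\text{Hahn holomorphic})\circ P\circ(\text{Hahn meromorphic})$, i.e.\ it factors \emph{through} $V$ rather than having fixed co-range. Each Hahn coefficient of such a product is then a \emph{finite} convolution sum (the set $\{(\alpha,\beta):\alpha+\beta=\gamma\}$ meeting two well-ordered supports is finite) of operators factoring through $V$, hence of finite rank; the same remark handles the Laurent coefficients away from $0$. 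Your closing sentence correctly identifies the obstacle, but the device you propose for overcoming it does not work as stated.
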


\begin{proof}
The proof generalizes that of Theorem VI.14 of  \cite{Reed-Simon}.
The assumptions imply that there exists a Hahn meromorphic function $B(z)$ with range in $\mathcal{H}_0$,
a finite rank operator $A$, and a $\delta>0$ such that $f(z) - A - B(z)$ is Hahn-holomorphic and
$\| f(z) - A - B(z) \| < 1$ for all $z \in U^{[\sigma]}:= D_\delta^{[\sigma]}\cap Y$. Thus
$(\Id - f(z)+A + B(z))^{-1}$ exists and is  Hahn holomorphic by Theorem \ref{dasInverse}.
Consequently $g(z)=\left( A + B(z) \right)(\Id - f(z)+A + B(z))^{-1}$ is a Hahn meromorphic function on $U^{[\sigma]}$ with values in the
Banach space $\mathcal{B}(\mathcal{H},V)$, where $V$ is the finite dimensional subspace of $\mathcal{H}$ spanned by $\mathcal{H}_0$ and $\mathrm{rg}(A)$. 
It is easy to see that
\begin{equation}\label{fredholm-g1}
(\Id-f(z))^{-1} = (\Id - f(z)+A +B(z))^{-1} (\Id-g(z))^{-1}
\end{equation}
where equality means here that the left hand side exists if and only of the right hand side exists.
Let now $P$ be the orthogonal projection onto $V$ and let $G(z)$ be the endomorphisms
of $V$ defined by restricting $g(z)$ to $V$, i.e.~$G(z)=g(z)\circ P$.
Invertibility of $(\Id-g(z))$ in $\mathcal{B}(\mathcal{H})$ is equivalent
to invertibility of 
\[
P(\Id-g(z))P:V \to V,
\]
and this is equivalent to $\det(\Id_{V}-G(z))\neq 0$.
Moreover, a straightforward computation shows
\begin{equation}\label{fredholm-g2}
(\Id-g(z))^{-1}=(P(\Id-g(z))P)^{-1} \:\big(P+g(z)(\Id-P)\big)+(\Id-P).
\end{equation}
Now note that $G(z)$ is a Hahn holomorphic family of endomorphisms of $V$. In particular $\det(\Id-G(z))$ is a Hahn meromorphic $\Complex$-valued function. As such, it is meromorphic in $ U^{[\sigma]}\setminus\{0\}$, and together with  Proposition \ref{prop1.3} this  shows that the set \[
S=\{z\in U^{[\sigma]} \mid \det(\Id-G(z))=0\}
\]
is either discrete in $U^{[\sigma]}$ or $S=U^{[\sigma]}$.
If $\det(\Id-G(z))\neq 0$, then after a choice of basis of $V$ the inverse $(\Id-G(z))^{-1}$ can be computed with Cramer's rule, showing that with respect to this basis
\[
\det(\Id-G(z)) (\Id-G(z))^{-1}\in \mathrm{Mat}\big(\dim V, \Complex[[e_\Gamma]]\big)
\]
is represented by a matrix with Hahn meromorphic entries.
After the identification
\[
\mathrm{Mat}\big(\dim V, \Complex[[e_\Gamma]]\big)=\mathrm{Mat}\big(\dim V, \Complex\big)[[e_\Gamma]]
\]
we see that the function $(\Id-G(z))^{-1}$ is Hahn meromorphic with coefficients in $\mathrm{End}(V)$ if there is only a single point in $U^{[\sigma]}$
for which it exists.  Consequently, due to \eqref{fredholm-g1} and \eqref{fredholm-g2}, $(\Id-f(z))^{-1}$
is Hahn meromorphic with all coefficients of $e_\gamma(z)$ with $\gamma<0$ being of finite rank, if there is only a single point in $U^{[\sigma]}$ for which  $(\Id-f(z))$ is invertible. 
So far we have proved the statement in $U^{[\sigma]}$. By the usual analytic Fredholm theorem, invertibility of  $(\Id-f(z))$ at a single point in
$Y$ implies that the inverse exists as a meromorphic function
on  $Y$. Conversely, we have seen that
invertibility of  $(\Id-f(z))$ at a single point in $U^{[\sigma]}$ implies that $(\Id-f(z))^{-1}$
exists as a Hahn meromorphic function on $U^{[\sigma]}$. By the usual meromorphic Fredholm theorem
it then exists as a Hahn meromorphic function on $Y$.
\end{proof}

\section{$z$-Hahn holomorphic functions} \label{section:z}
The prominent class of Hahn holomorphic functions is defined by convergent power series with non-integer powers.

Let $\Gamma\subset\Reell$ be a subgroup with order inherited from the standard ordering of $\Reell$.
As the group $G$ we will take the group generated by the set of functions \[
e_{\alpha}(z) := z^\alpha,\qquad \alpha \in \Gamma,\quad z\in D_r^{[\sigma]} \backslash \{0\}.
\]
In this definition we choose the principal branch of the logarithm with $|\Im\log z|<\pi$ for $z\in\Complex\setminus (-\infty,0]$ and as usual set
$\log(r e^{\rmi\varphi})=\log r+ \rmi\varphi,\;|\varphi|<\sigma$ and $z^\alpha:=e^{\alpha \log z}$.

A $z$-Hahn holomorphic function $f$ with values $\Complex$ then is a holomorphic function on $D_r^{[\sigma]} \backslash \{0\}$ such that the
generalized power series
\[
f(z)=\sum_\gamma a_\gamma z^\gamma,\qquad a_\gamma\in\Complex
\]
is normally convergent in $Y\cap D_\delta^{[\sigma]}$ for some $\delta>0$.

Note that every well-ordered subset of $W\subset \Gamma^+$ is admissible for $e$, because for every $\alpha\in W$,
\begin{equation}\label{power-monotonie}
|z^\alpha|= |z|^\alpha\le |z|^{\min W},\qquad z\in D_{1/2}^{[\sigma]}.
\end{equation}

\begin{example}
If $\Gamma=\mathbb{Z}$ and $e_k(z)=z^k$ then the set of Hahn series corresponds
to the formal power series and the set of $z$-Hahn holomorphic functions can be identified with the
set of functions that are holomorphic on the disc of radius $\delta>0$ centered at the origin. \end{example}

\begin{example} The
series
\[
z^\pi\sum_{k=0}^\infty \frac{z^{2k}}{(2k)!}
\]
converges normally on $D_r$ for any $r>0$ and defines a $z$-Hahn holomorphic function for $\Gamma=\pi\Ganz+2\Ganz$.
\end{example}

\begin{example}
Puiseux series and Levi-Civita series as defined in  e.g.~\cite{ribenboim} are special cases of Hahn series with certain $\Gamma\subset \Rational$.
In case they are normally convergent they define $z$-Hahn holomorphic functions.
\end{example}

In the following let $D_R=D_R^{[\infty]}\setminus\{0\}$ be the pointed disk of radius $R$ in the logarithmic covering of the complex plane. 
The next result is in analogy with complex analysis, where series expansions converge normally on the maximal disc embedded in the domain of holomorphicity:

\begin{theorem}\label{th5.4}
Let $\mathcal{R}$ be a Banach algebra and suppose $f$ is $z$-Hahn-holomorphic. Furthermore suppose that $f$ is bounded on $D_{\tilde R+\eps}$ for some $\eps,\tilde R>0$.

Let
\[
f(z)=\sum_{\alpha\in\supp f} a_\alpha z^\alpha
\]
be its expansion (which we do not assume to converge normally on $D_{\tilde R}$).

Then for all $R$ with $0<R<\tilde R$:
\[
\sum_{\alpha\in\supp f} \|a_\alpha\| R^\alpha \le \sup_{|z|= R} \|f(z)\| \sum_{\alpha\in\supp f} (R/\tilde R)^\alpha.
\]
In particular, if $\sum_{\alpha} (R/\tilde R)^\alpha<\infty$ then the Hahn-series converges normally on $D_{R}$.
\end{theorem}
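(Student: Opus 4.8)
The plan is to deduce the inequality from a uniform bound on its finite partial sums. Since each term $\|a_\alpha\|R^\alpha$ is non-negative, it suffices to show that for every finite $F\subset\supp f$ one has $\sum_{\alpha\in F}\|a_\alpha\|R^\alpha\le\big(\sup_{|z|=R}\|f(z)\|\big)\sum_{\alpha}(R/\tilde R)^\alpha$, and then pass to the supremum over $F$. The ``in particular'' clause is then immediate: once the main inequality is known, $\sum_\alpha(R/\tilde R)^\alpha<\infty$ forces $\sum_\alpha\|a_\alpha\|R^\alpha<\infty$, and because $\sup_{|z|=R}|z^\alpha|=R^\alpha$ this is precisely normal convergence of $\h f$ on $D_R$. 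Throughout I would reduce the $\mathcal R$-valued statement to scalar estimates by selecting, for the finitely many $\alpha\in F$, Hahn--Banach functionals $\lambda_\alpha$ with $\|\lambda_\alpha\|=1$ and $\lambda_\alpha(a_\alpha)=\|a_\alpha\|$.

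The coefficients will be recovered by a mean-value (generalized Fourier) functional adapted to the logarithmic cover. For a function $g$ of the angular variable set $M\{g\}:=\lim_{T\to\infty}\frac1{2T}\int_{-T}^{T}g(v)\,dv$, and recall the orthogonality $M\{e^{\rmi(\beta-\alpha)v}\}=0$ for real $\alpha\neq\beta$ and $=1$ for $\alpha=\beta$. On a disc $D_\delta$ where $\h f$ converges normally one may integrate the series term by term, so that for $0<\rho<\delta$ one gets $a_\alpha\rho^{\alpha}=M\big\{f(\rho e^{\rmi v})e^{-\rmi\alpha v}\big\}$. Passing to the strip variable $w=\log z=\log\rho+\rmi v$ and $g(w)=f(e^{w})=\sum_\alpha a_\alpha e^{\alpha w}$, the hypothesis that $f$ be bounded on $D_{\tilde R+\eps}$ says that $g$ is holomorphic and bounded on the half-plane $\{\Re w<\log(\tilde R+\eps)\}$; by the identity theorem the extraction formula persists for every $\rho<\tilde R$, in particular for $\rho=R$. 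This yields the crude per-coefficient bound $\|a_\alpha\|R^\alpha\le\sup_{|z|=R}\|f(z)\|$.

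The decisive point is to upgrade this by the gain factor $(R/\tilde R)^\alpha$, which encodes analyticity across the annulus $R\le|z|\le\tilde R$. In the strip picture the natural device is to shift the integration line $\Re w=\log R$ outward towards $\Re w=\log\tilde R$: under such a shift the weight $e^{-\alpha w}$ contributes exactly $e^{-\alpha(\log\tilde R-\log R)}=(R/\tilde R)^\alpha$, producing the required exponential decay in $\alpha$ and hence the summable bound. \emph{The main obstacle is that a naive outward shift replaces the trace of $g$ on the inner line by its values on the outer line, and therefore delivers the factor $(R/\tilde R)^\alpha$ multiplied by the outer supremum $\sup_{|z|=\tilde R}\|f(z)\|$ rather than by the inner supremum $\sup_{|z|=R}\|f(z)\|$ demanded here.} Keeping the inner supremum is the crux of the argument: it requires controlling $g$ throughout the annulus by its trace on the inner circle alone, i.e.\ a Phragm\'en--Lindel\"of / strip-Poisson-kernel estimate that exploits both the one-sided spectrum $\supp f\subset\Gamma_0^+$ (so that $g$ decays as $\Re w\to-\infty$) and the a priori boundedness on $D_{\tilde R+\eps}$. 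Once the coefficientwise estimate $\|a_\alpha\|R^\alpha\le\big(\sup_{|z|=R}\|f(z)\|\big)(R/\tilde R)^\alpha$ is secured in this way, summation over $\alpha$ as in the first paragraph closes both the inequality and the normal-convergence assertion.
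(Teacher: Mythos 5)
Your proposal correctly reproduces the paper's machinery---reduction to finite partial sums, Hahn--Banach functionals $\lambda_\alpha$, and coefficient extraction by averaging over $L$-fold covers of circles (your mean $M$ is the paper's $\Lambda_{R,L}$, $\Lambda_R$)---but it stops short of a proof: the step you yourself call the crux, namely upgrading the contour-shift bound so that the factor $(R/\tilde R)^\alpha$ is paired with the \emph{inner} supremum $\sup_{|z|=R}\|f(z)\|$, is left as an unproven Phragm\'en--Lindel\"of assertion. That gap cannot be filled, because the inequality with the inner supremum is false: take $\mathcal{R}=\Complex$ and $f(z)=z^N$ with $N\ge 1$. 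Then the left-hand side is $R^N$, while the right-hand side is $R^N\,(R/\tilde R)^N<R^N$ for any $0<R<\tilde R$. So no strip-Poisson-kernel or Phragm\'en--Lindel\"of argument can produce the statement as printed.

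What is actually true---and what the paper's own proof establishes---is the inequality with the \emph{outer} supremum, $\sum_{\alpha}\|a_\alpha\|R^\alpha\le\sup_{|z|=\tilde R}\|f(z)\|\sum_{\alpha}(R/\tilde R)^\alpha$, i.e.\ exactly what your ``naive outward shift'' yields. In the paper's proof the partial sum $\sum_{\alpha\in I_k}\|a_\alpha\|R^\alpha$ is identified as the constant term of $\sce{g_k}{f}$ with $g_k(z)=\sum_{\alpha\in I_k}\lambda_\alpha z^{-\alpha}R^\alpha$, and this constant term is computed as $\Lambda_{\tilde R}(\sce{g_k}{f})$, an average over the circle $|z|=\tilde R$; on that circle $\|g_k(z)\|\le\sum_{\alpha}(R/\tilde R)^\alpha$, which is precisely where the decay factor comes from. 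The ``$\sup_{|z|=R}$'' in the final display of the paper's proof (and in the theorem statement) must be read as ``$\sup_{|z|=\tilde R}$''; note that the only place the theorem is applied, the uniform-limit theorem that follows it, supplies a uniform bound on the whole disc, so the outer-sup version suffices there. With the statement corrected, your argument---per-coefficient Cauchy-type estimates on the circle of radius $\tilde R$, justified by contour deformation from a small radius where the series converges normally, followed by summation over a finite set---is complete and is essentially the paper's proof; the paper merely packages the coefficients indexed by $I_k$ into the single test function $g_k$ rather than treating each $\alpha$ separately.
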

\begin{proof}
As a Hahn holomorphic function, $f$ converges normally on $D_{2\delta}$ for some $\delta>0$ and is holomorphic in $D_{\tilde R}$.
Let $\Lambda_{R,L}$ be the averaging operator
\[
\Lambda_{R,L}(f)=\frac{1}{2\pi \rmi L}\int_{S_R^{(L)}}\frac{f(z)}{z}\,dz,
\]
where $S_R^{(L)}(t)=R e^{\rmi\pi t}, t\in (-L,L] $ is the $L$-fold cover of the circle with radius $R$. 
Certainly 
\[
\|\Lambda_{R,L}(f)\| \le \sup_{|z|=R} \|f(z)\|.
\]
Since $f$ is holomorphic for $0<|z|<R$, we have
\[
\frac{1}{2\pi \rmi L}\int_{S_R^{(L)}}\frac{f(z)}{z}\,dz= \frac{1}{2\pi \rmi L}\int_{S_\delta^{(L)}}\frac{f(z)}{z}\,dz+O(L^{-1}).
\]
This shows 
\[
\Lambda_R(f):=\lim_{L\to\infty}\Lambda_{R,L}(f)=\lim_{L\to\infty} \sum_{\alpha\in\supp f} \frac{a_\alpha}{2\pi \rmi L} \int_{S_\delta^{(L)}} z^{\alpha-1}\,dz = a_0.
\]

Suppose $(I_k)$ is
a family of finite subsets of $\supp f$ such that
\[
I_1\subset I_2\subset\ldots \quad\text{and}\quad \bigcup_k I_k=\supp f.
\] For $z\in D_{\tilde R}$ let
$
g_k(z)=\sum_{\alpha\in I_k} \lambda_\alpha z^{-\alpha} R^\alpha
$
where $\lambda_\alpha\in \mathcal{R}^*:=\mathcal{B}(\mathcal{R},\Reell)$ are chosen such that
\[
\|\lambda_\alpha\|=1,\qquad \lambda_\alpha(a_\alpha)=\|a_\alpha\|.
\]
Such $\lambda_\alpha$ exist by the Hahn-Banach theorem.

Then $g_k$ is holomorphic in $D_{\tilde R}$  and
$
\|g_k(z)\|\le\sum_{\alpha\in I_k}|z|^{-\alpha} R^\alpha.
$
Moreover
\[
\sce{g_k}{f}(z)=\sum_{\substack{\alpha\in I_k\\ \gamma=-\alpha+\beta}}\lambda_\alpha(a_\beta) R^\alpha z^\gamma
\]
and the constant term of this function is
\[
\sum_{\alpha\in I_k} \|a_\alpha\| R^\alpha = \Lambda_\delta(\sce{g_k}{f})= \Lambda_{\tilde R}(\sce{g_k}{f}).
\]
Therefore
\begin{align*}
\sum_{\alpha\in I_k} \|a_\alpha\| R^\alpha
&\le \sup_{|z|=R} |\sce{g_k}{f}(z)|
\le \sup_{|z|=R} \|g_k(z)\| \|f(z)\| \\
&\le \sup_{|z|=R} \|f(z)\| \sum_{\alpha\in \supp f} (R/\tilde R)^\alpha
\end{align*}
and the theorem follows by letting $k\to\infty$.
\end{proof}

\begin{theorem}
Let $R>0$ and assume $f_k\colon D_R\to V$ is a sequence of bounded $z-$Hahn-holomorphic functions that converge uniformly to a bounded function $f\colon D_R\to V$. Suppose that there exist constants $C>0, \hat\eps>0$ such that for each $k\in\Nat$
\[
\sum_{\alpha\in\supp f_k}{\hat\eps}^\alpha<C.
\]
Suppose furthermore that there exists $I\subset\Reell$ such that $\supp f_k \to I$ in the following sense:
For each compact subset $K\Subset \Reell$ there exists $N>0$ such that
\[
\supp f_k\cap K=I\cap K\quad\text{for all}\quad k\ge N.
\]

Then $f$ is Hahn-holomorphic on $D_R$ with $\supp f\subset I$.
\end{theorem}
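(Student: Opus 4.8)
\emph{Strategy.} The plan is to represent $f$ by the coefficientwise limit of the expansions of the $f_k$; the crux is to show that these coefficients converge and assemble into a normally convergent Hahn series. The mechanism is to apply Theorem \ref{th5.4} not to the individual $f_k$ but to their differences, so that the uniform bound $\sum_\alpha\hat\eps^\alpha<C$ furnishes control independent of $k$.

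\emph{Setup and the key estimate.} First I would fix radii $0<\rho<\tilde R$ with $\rho\le\hat\eps\,\tilde R$ and $\tilde R+\eps_0<R$ for some $\eps_0>0$, and write $f_k(z)=\sum_\alpha a^{(k)}_\alpha z^\alpha$. Uniform convergence gives $\sup_k\sup_{z\in D_R}\|f_k(z)\|=:M<\infty$ and $\|f_k-f_j\|_\infty:=\sup_{z\in D_R}\|f_k(z)-f_j(z)\|\to 0$ as $k,j\to\infty$. Now $f_k-f_j$ is $z$-Hahn holomorphic (Proposition \ref{ringhom}), bounded on $D_{\tilde R+\eps_0}\subset D_R$, with support in $\supp f_k\cup\supp f_j\subset[0,\infty)$. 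Since $(\rho/\tilde R)^\alpha\le\hat\eps^\alpha$ for $\alpha\ge 0$, Theorem \ref{th5.4} applied at radius $\rho$ yields
\[
\sum_\alpha \|a^{(k)}_\alpha-a^{(j)}_\alpha\|\,\rho^\alpha
\;\le\;\sup_{|z|=\rho}\|f_k(z)-f_j(z)\|\sum_{\alpha\in\supp f_k\cup\supp f_j}(\rho/\tilde R)^\alpha
\;\le\; 2C\,\|f_k-f_j\|_\infty .
\]
Hence $(a^{(k)}_\alpha)_\alpha$ is Cauchy in the Banach space of $V$-valued families $(b_\alpha)$ with $\sum_\alpha\|b_\alpha\|\rho^\alpha<\infty$, and converges to a family $(a_\alpha)$ satisfying $\sum_\alpha\|a_\alpha\|\rho^\alpha<\infty$ and $a_\alpha=\lim_k a^{(k)}_\alpha$ for each $\alpha$.

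\emph{Identifying the limit.} To finish I would identify $\h f:=\sum_\alpha a_\alpha z^\alpha$ with $f$. For the support, if $\alpha\notin I$ then the stabilization hypothesis with $K=\{\alpha\}$ forces $\alpha\notin\supp f_k$ for large $k$, so $a_\alpha=0$; thus $\{\alpha:a_\alpha\ne 0\}\subset I$. I would then verify that $I$ is well-ordered: since $\supp f_k\subset[0,\infty)$ one first gets $I\subset[0,\infty)$, and as $I\cap K=\supp f_k\cap K$ is well-ordered for every compact $K$ and large $k$, localizing an arbitrary nonempty $S\subset I$ to $K=[0,\inf S+1]$ produces its minimum. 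Therefore $\h f$ is a Hahn series with well-ordered, hence admissible (by \eqref{power-monotonie}), support contained in $I$, normally convergent on $D_\rho$, and defines a $z$-Hahn holomorphic function $\tilde f$. The weighted estimate above gives $\sup_{0<|z|\le\rho}\|f_k(z)-\tilde f(z)\|\le\sum_\alpha\|a^{(k)}_\alpha-a_\alpha\|\rho^\alpha\to 0$, so $f_k\to\tilde f$ uniformly on $D_\rho$; comparing with $f_k\to f$ yields $f=\tilde f$ near $0$. As $f$ is holomorphic on $D_R$ (a uniform limit of holomorphic functions) and agrees near $0$ with the normally convergent series $\h f$, it is $z$-Hahn holomorphic on $D_R$ with $\supp f\subset I$.

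\emph{Main obstacle.} The decisive step is the Cauchy estimate: a coefficientwise bound extracted by a contour/averaging functional as in the proof of Theorem \ref{th5.4} degrades like $\rho^{-\alpha}$ and cannot be summed over $\alpha$, whereas applying Theorem \ref{th5.4} to the whole difference $f_k-f_j$ bundles all coefficients into one weighted sum in which the hypothesis $\sum_\alpha\hat\eps^\alpha<C$ keeps the geometric factor $\sum_\alpha(\rho/\tilde R)^\alpha$ finite and uniform in $k,j$. A secondary subtlety is that the limiting support must be well-ordered in order to constitute a Hahn series, which is exactly why the hypothesis demands stabilization of $\supp f_k$ on every compact set rather than a weaker mode of convergence.
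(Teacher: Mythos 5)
Your proposal is correct and follows essentially the same route as the paper: both apply Theorem \ref{th5.4} to the differences $f_k-f_j$ so that the hypothesis $\sum_\alpha\hat\eps^\alpha<C$ yields a weighted $\ell^1$-Cauchy estimate on the coefficients uniformly in $k,j$, then pass to the coefficientwise limit, check well-orderedness of $I$, and identify the resulting normally convergent series with $f$. The only differences are cosmetic (you phrase the Cauchy property in a weighted sequence space, the paper works with finite subsets $\tilde I\subset I$ and lets them exhaust $I$).
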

\begin{proof}
First, $I$ is well-ordered because $\supp f_k\to I$. Let $f_k(z)=\sum_{\alpha\in \supp f_k}a_\alpha^{(k)} z^\alpha$ be the expansion of $f_k$.

Let $\eps>0$, then there exists $N_1>0$ such that $\|f_{\ell}(z)-f_k(z)\|<\eps$ for all $k,\ell>N_1$ and all $z\in D_R$. Given a finite subset $\tilde I\subset I$, we can choose $N>N_1$
such that $\tilde I \cap \supp f_k=\tilde I$ for all $k>N$.
Theorem \ref{th5.4} then shows
for all $k,\ell>N$ and $\tilde R<\hat\eps R$ that
\begin{align*}
\sum_{\alpha\in \tilde I} \|a_\alpha^{(\ell)}-a_\alpha^{(k)}\|\cdot {\tilde R}^\alpha 
&\le \sup_{|z|=\tilde R}\|f_\ell(z)- f_k(z)\|\cdot\hspace{-2em}\sum_{\alpha\in\supp f_k\cup \supp f_\ell}\hspace{-2em}{\hat\eps}^\alpha
< 2\, C\,\eps.
\end{align*}
It follows that $(a_\alpha^{(k)})_k$ is a Cauchy sequence for each $\alpha$.
Let $a_\alpha:=\lim\limits_{k\to\infty}a_\alpha^{(k)}$.
Given a finite subset $\tilde I\subset I$ and $\eps>0$ we can find $N$ such that $\|a_\alpha^{(k)}-a_\alpha\|<\eps$ for all $k>N, \alpha\in\tilde I$. Then for $|z|<\tilde R<\hat\eps$,
\[
\Big\| \sum_{\alpha\in\tilde I}a_\alpha^{(k)} z^\alpha - \sum_{\alpha\in\tilde I}a_\alpha z^\alpha \Big\|
<\sum_{\alpha\in \tilde I} \|a_\alpha^{(k)}-a_\alpha\| {\tilde R}^\alpha
<C\,\eps.
\]
This shows that $\sum_{\alpha\in I}a_\alpha z^\alpha $ is a Hahn series for $f$. By the uniform convergence of $(f_k)$, $f$ is analytic in $D_R^{[\sigma]}\setminus\{0\}$. Its Hahn series converges normally on $D_{\tilde R}$ because
\begin{align*}
\sum_{\alpha\in\tilde I}\|a_\alpha\| {\tilde R}^\alpha
&\le \sum_{\alpha\in\tilde I}\|a_\alpha^{(\ell)}\| {\tilde R}^\alpha
+\sum_{\alpha\in\tilde I}\|a_\alpha^{(k)}-a_\alpha\| {\tilde R}^\alpha
+\sum_{\alpha}\|a_\alpha^{(k)}-a_\alpha^{(\ell)}\| {\tilde R}^\alpha \\
&\le \sum_{\alpha\in\supp f_\ell}\|a_\alpha^{(\ell)}\| {\tilde R}^\alpha
+C\eps
+2 C \eps <\infty.
\end{align*}
for all finite $\tilde I\subset I$, $\ell$ sufficiently large, and $k\gg \ell$ depending on $\tilde I$.
\end{proof}

\section{$z \log z$-Hahn holomorphic functions} \label{section:zlogz}

In the following let $\Reell^2$ be equipped with the lexicographical order and
let $\Gamma \subset \Reell^2$ be a subgroup with order inherited from that of $\Reell^2$.
Let $Y=D^{[\sigma]}_{1/2}$ for fixed $\sigma>0$.
The group $G$ will be generated by
\[
e_{(\alpha,\beta)}(z) := z^\alpha (-\log z)^{-\beta},\qquad (\alpha,\beta)\in \Gamma,\quad |z|<1.
\]
With the inclusion $\Reell\times\{0\}\subset \Reell^2$
this comprises the power functions $z^\alpha$ from the previous section.
Note that
\[
\lim_{z\to 0}e_{(\alpha,\beta)}(z)=0\iff \alpha>0 \lor (\alpha=0 \land \beta>0)
\]
which is equivalent to $(\alpha,\beta)>(0,0)$ in the lexicographical ordering of $\Reell^2$. 
The monotonicity \eqref{power-monotonie} of power functions $z^\alpha$
has to be replaced by the following ``weak monotonicity'' property.
\begin{lemma}\label{lem6.1}
Let $\mathcal{S}\subset \Gamma^+=\{\gamma\in\Gamma\mid\gamma>0\}$ be a set such that there exists an $N\in\Nat_0$
with
\begin{equation}
-\beta\le N \alpha \quad\text{for all}\quad (\alpha,\beta) \in \mathcal{S}.\tag{$\ast$}
\end{equation}
Then
\begin{enumerate}[a)]
\item\label{teil1} There exists $r_N<1$ such that for $(\alpha,\beta)\in\mathcal S$ and $|\theta|<\sigma$ the function
\[
r\mapsto |r e^{\rmi\theta}|^\alpha |\log (r e^{\rmi\theta})|^{-\beta}
\]
is monotonously increasing on $[0,{{r}_N})$.

\item\label{teil2} Given $x$ with $0<x<{r}_N$, there exists $\rho_N(x)\le x$ such that for all $z$ with $0\le |z|\le \rho_N(x), |\arg z|<\sigma$ we have
\[
(\alpha,\beta)\in\mathcal S \quad\Longrightarrow\quad |e_{(\alpha,\beta)}(z)| \le
e_{(\alpha,\beta)}(x)
\] \end{enumerate}
\end{lemma}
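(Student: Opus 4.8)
The plan is to reduce both parts to a single explicit computation of the modulus of $e_{(\alpha,\beta)}$ followed by two elementary one-variable estimates. Writing $z=re^{\rmi\theta}$ with $0<r<1$ and $|\theta|<\sigma$, and setting $u:=-\log r>0$, we have $-\log z=u-\rmi\theta$, which has \emph{positive real part} for every such $z$; hence the principal branch is unambiguous and, since $\alpha,\beta$ are real,
\[
|e_{(\alpha,\beta)}(z)|=r^\alpha\,|{-}\log z|^{-\beta}=r^\alpha\,(u^2+\theta^2)^{-\beta/2}.
\]
I would record this formula first, together with the remark that for real $x$ one has $e_{(\alpha,\beta)}(x)=x^\alpha(-\log x)^{-\beta}=x^\alpha u_x^{-\beta}$ with $u_x:=-\log x$, since this is just the $\theta=0$ specialisation. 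Everything below is an analysis of $g_{\alpha,\beta}(r,\theta):=r^\alpha(u^2+\theta^2)^{-\beta/2}$.

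For part \ref{teil1} I would fix $\theta$ and differentiate $\log g_{\alpha,\beta}$ in $r$. A short computation gives
\[
\frac{\partial}{\partial r}\log g_{\alpha,\beta}(r,\theta)=\frac1r\Big(\alpha+\frac{\beta u}{u^2+\theta^2}\Big),
\]
so monotonicity is equivalent to $\alpha(u^2+\theta^2)+\beta u\ge 0$. Since $(\alpha,\beta)\in\Gamma^+$ forces $\alpha\ge 0$, the only nontrivial case is $\beta<0$; there I would invoke $(\ast)$ in the form $-\beta\le N\alpha$ to obtain $\alpha(u^2+\theta^2)+\beta u\ge\alpha(u^2-Nu+\theta^2)\ge\alpha\,u(u-N)$, which is nonnegative as soon as $u\ge N$. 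Choosing $r_N:=e^{-\max\{N,1\}}$ guarantees both $r_N<1/2$ and $u=-\log r>N$ on $[0,r_N)$, proving \ref{teil1}; the cases $\beta\ge 0$ (in particular $N=0$) follow at once from the same inequality.

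For part \ref{teil2} I would use \ref{teil1} to localise the supremum. Monotonicity in $r$ shows that on $0\le|z|\le\rho$ (any $\rho\le x<r_N$) the modulus is largest at $r=\rho$, while in $\theta$ the factor $(v^2+\theta^2)^{-\beta/2}$ (with $v:=-\log\rho$) is monotone, extremal at $\theta=0$ when $\beta\ge 0$ and as $|\theta|\to\sigma$ when $\beta<0$. For $\beta\ge 0$ this gives $g_{\alpha,\beta}(r,\theta)\le\rho^\alpha v^{-\beta}\le x^\alpha u_x^{-\beta}=e_{(\alpha,\beta)}(x)$ for every $\rho\le x$, so these exponents impose no constraint. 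The real work is $\beta<0$: setting $p:=-\beta>0$ (so $p\le N\alpha$), the required bound $\rho^\alpha(v^2+\sigma^2)^{p/2}\le x^\alpha u_x^{p}$ becomes, after taking logarithms and dividing by $\alpha>0$,
\[
v-u_x\ \ge\ \frac{p}{2\alpha}\,\log\frac{v^2+\sigma^2}{u_x^2}.
\]
Because $p/\alpha\le N$ and the right-hand side is nonnegative and increasing in $p/\alpha$, it suffices to satisfy the worst case $p/\alpha=N$, i.e.\ to find $v$ (hence $\rho_N(x)=e^{-v}\le x$) with $v-u_x\ge\tfrac{N}{2}\log\big((v^2+\sigma^2)/u_x^2\big)$.

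The main obstacle is precisely this last inequality, and its resolution is the heart of the lemma: the left-hand side grows \emph{linearly} in $v$ while the right-hand side grows only \emph{logarithmically}, so the inequality holds for all sufficiently large $v$. Fixing any such $v$ determines $\rho_N(x)$; and since the reduction to $p/\alpha=N$, $|\theta|=\sigma$ made the estimate monotone in both $p/\alpha$ and $|\theta|$, the same $\rho_N(x)$ works simultaneously for every $(\alpha,\beta)\in\mathcal S$, the exponents with $\beta\ge 0$ being handled a fortiori. This uniform choice, combined with \ref{teil1}, yields the weak monotonicity asserted in \ref{teil2}.
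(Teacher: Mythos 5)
Your argument is correct; the paper itself omits this proof, stating only that it is elementary, so there is no authorial argument to compare against. Your computation $|e_{(\alpha,\beta)}(re^{\rmi\theta})|=r^\alpha(u^2+\theta^2)^{-\beta/2}$ with $u=-\log r$, the logarithmic-derivative criterion $\alpha(u^2+\theta^2)+\beta u\ge 0$ for part a), and in particular the reduction of part b) to the single worst-case inequality $v-u_x\ge\tfrac{N}{2}\log\big((v^2+\sigma^2)/u_x^2\big)$ (which makes the choice of $\rho_N(x)$ uniform over all of $\mathcal{S}$, the crux of weak monotonicity) are all sound and supply exactly the missing details.
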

\begin{proof}
The proof is elementary and will be omitted here.
\end{proof}
It is not difficult to see that if $\mathcal{S}$ satisfies $(\ast)$, then a similar inequality holds for the set
$
(\mathcal{S} - A)\cap \Gamma^+
$
where $A\subset\mathcal{S}$ and the constant $N$ depends on $A$.
Thus a set $\mathcal{S}$ with $(\ast)$ is admissible for $e$.

Now the assumptions from section \ref{abschnittHHol}  are all satisfied and we can consider Hahn holomorphic and meromorphic functions:
A \emph{$z\log z$-Hahn holomorphic function} with values in a Banach algebra $\mathcal{R}$ is defined by a normally convergent series
\[
f(z)=\sum_{(\alpha,\beta)\in \Gamma} a_{(\alpha,\beta)} z^\alpha(-\log z)^{-\beta},\qquad a_{(\alpha,\beta)}\in\mathcal{R},\qquad z \in D^{[\sigma]}_{1/2},
\]
such that $\supp(f)$ is contained in a set $\mathcal{S}\cup\{(0,0)\}$ with $\mathcal{S}$ as in Lemma \ref{lem6.1}.

Note that the property ($\ast$) is invariant under addition and multiplication of Hahn holomorphic functions, so that
$z\log z$-Hahn holomorphic functions indeed are a ring, and all results from section \ref{abschnittHHol} apply.

\begin{example}
The series
\[
\sum_{n=0}^\infty z^n(-\log z)^{n}=(1+z\log{z})^{-1}
\]
is a Hahn series in $\Gamma=\Ganz\times\Ganz$ with support
$
\{ (n,-n) \mid n\in\Nat_0\}.
$
It converges normally on the set $\{z \in \ZZ \mid | z \log z | <1/2 \}$
and therefore defines a $z\log z$-Hahn holomorphic function on  $D^{[\sigma]}_r$ for any $\sigma>0$ and sufficiently small $r=r(\sigma)$.
\end{example}

\begin{example}
The formal series \[
\sum_{n=0}^\infty \frac{1}{n!} {z} (-\log {z})^{n}
\]
is \emph{not} a Hahn series for $\Gamma=\Ganz\times\Ganz$, because the support \[
\{ (1,-n) \mid n\in\Nat_0\}
\]
is not a well-ordered subset of $\Gamma$.
\end{example}

\begin{example}
The logarithm $\log z=\frac{z\log z}{z}$ is Hahn meromorphic for $\Gamma\subset \Ganz\times\Ganz$.
\end{example}

\begin{example}
The series
\[
\sum_{n=1}^\infty\sum_{m=1}^\infty \frac{1}{m^2} z^n (-\log z)^{(2n-1+\frac{1}{m})}
\]
defines a $z\log z$-Hahn holomorphic function in a neighborhood $D^{[\sigma]}_\epsilon$ of $0$ for any $\sigma>0$ and for small enough $\epsilon=\epsilon(\sigma)$ with $\Gamma=\Ganz\times\Rational$. Its support is
\[
\{ (n,1-2n-1/m) \mid n,m\in\Nat \}.
\]
\end{example}

\section{Applications: Hahn meromorphic continuation of resolvent kernels} \label{section:beispiel}
\subsection{}\label{section:71}
Suppose that $\nu\ge 0$. Then the differential operator $B_\nu$ associated to the Bessel differential equation in its Liouville normal form,
\begin{equation}\label{besselop}
B_\nu:= - \frac{\partial^2}{\partial x^2} +\frac{\nu^2-\frac{1}{4}}{x^2}\Id,
\end{equation}
is a non-negative symmetric operator on  the space $C_c^\infty((0,\infty))$
equipped with the inner pro\-duct inherited from $L^2((0,\infty), dx)$. 
In the following we will  denote the Friedrichs extension of $B_\nu$ by the same symbol $B_\nu$.

The kernel $r_\lambda^{(\nu)}$ of the resolvent $(B_\nu-\lambda^2)^{-1}$ can  be constructed directly out of the fundamental system of the corresponding Sturm-Liouville
equation  and this results in (see e.g. \cite[pg. 371]{bruening-seeley})
\begin{equation}\label{defkernel}
r_\lambda^{(\nu)}(x,y)=\frac{\rmi\,\pi}{2} \sqrt{xy}\cdot J_\nu\big(\lambda\min(x,y)\big) H_\nu^{(1)}\big(\lambda \max(x,y)\big),\quad 0< x,y< \infty,
\end{equation}
where
 $H_\nu^{(1)}$ is the Hankel functions of order $\nu$ of the first kind and $J_\nu$ is the Bessel function.

The proof of the following Lemma uses the well-known expansion of Bessel and Hankel functions, and will be given at the end of this section.

\begin{lemma}\label{lemkernbd}
For every $\nu>0$ and $(x,y)\in (0,\infty)\times(0,\infty)$ the kernel $\lambda\mapsto r_\lambda^{(\nu)}(x,y)$ defines a $z\log z$-Hahn-holomorphic function.

\begin{enumerate}[a)] \item For  $\nu\in\Reell_+\setminus\Nat_0$, 
\[
r_\lambda^{(\nu)}(x,y)=\lambda^{2\nu} f_1^{(\nu)}(x,y)(\lambda)+ f_2^{(\nu)}(x,y)(\lambda),
\]
where $\lambda\mapsto f_j^{(\nu)}(x,y)(\lambda)$ are even and entire. In particular $r_\lambda^{(\nu)}(x,y)$ is $z$-Hahn-holomorphic with support contained in $2\Ganz+2\nu\Ganz$.

\noindent Let $a_{j;2k}^{(\nu)}(x,y)$ be the coefficient of $\lambda^{2k}$ in the Taylor series expansion of $f_j^{(\nu)}(x,y)$.

\noindent There is a constant $C_1$, such that for $0\le x\le y$
\begin{align*}
|a_{1;2k}^{(\nu)}(x,y)| 
&\le R^{-2k} C_1(\nu) (x y)^{\nu+1/2} e^{R(x+y)},&  R&>0.\\
\intertext{For  $c>0$ and every $r_0>0$ there is a constant $C_2$, such that for all $y\ge x\ge c$}
|a_{2;2k}^{(\nu)}(x,y)|
&\le R^{-2k}\frac{C_2(\nu,r_0)}{\sqrt{R}}   \sqrt{x}(x/y)^\nu  e^{R (x+y)}, & R&\ge r_0.
\end{align*}
\item The kernel $\lambda\mapsto r_\lambda^{(\nu)}(x,y)$  is
 a $z\log z$-Hahn-holomorphic function with support contained in $2\Ganz \times \Ganz$, if  $\nu=n\in \Nat$:
 \[
r_\lambda^{(n)}(x,y)=\log(\lambda)  g_1^{(n)}(x,y)(\lambda)+ g_2^{(n)}(x,y)(\lambda),
 \]
where $\lambda\mapsto g_j^{(\nu)}(x,y)(\lambda)$ are even and entire.
 
 \noindent The coefficients $b_{j;2k}^{(n)}(x,y)$ in its Hahn-series expansion
 can be estimated by
 \begin{align*}
  |b_{1;2k}^{(n)}(x,y)|&\le R^{-2k} \sqrt{xy}\frac{(R/2)^{2n}}{(n!)^2}e^{R(x+y)},& R&>0,\\
 |b_{2;2k}^{(n)}(x,y)|&\le R^{-2k} e^{R(x+y)} \Big(\hat{c}_1 x^{n+1}y^{n+1/2}\frac{(R/2)^{2n}}{ n!(n-1)!}+c_2\Big),& R&>0.
 \end{align*}

\end{enumerate}

\end{lemma}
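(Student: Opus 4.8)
The plan is to feed the classical power-series and connection formulas for $J_\nu$ and $H_\nu^{(1)}$ into the explicit kernel \eqref{defkernel}, reorganize the result as a finite combination of \emph{explicit even entire functions of $\lambda$} multiplied by the elementary Hahn monomials $\lambda^{2\nu}$ (non-integer case) or $\log\lambda$ (integer case), read off the support from the two families of exponents, and finally estimate the Hahn coefficients by Cauchy's inequality on circles $|\lambda|=R$. Throughout I would fix $0<x\le y$ (so $\min=x$, $\max=y$) and write $J_\nu(w)=(w/2)^\nu\,\tilde J_\nu(w)$ with $\tilde J_\nu(w)=\sum_{k\ge0}\frac{(-1)^k}{k!\,\Gamma(k+\nu+1)}(w/2)^{2k}$ an even entire function; the whole argument is the analytic bookkeeping of how the factors $(\lambda x/2)^{\pm\nu}$, $(\lambda y/2)^{\pm\nu}$ and the logarithm combine.

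For $\nu\in\Reell_+\setminus\Nat$ I would insert the connection formula $H_\nu^{(1)}=\tfrac{\rmi}{\sin(\nu\pi)}\bigl(e^{-\rmi\nu\pi}J_\nu-J_{-\nu}\bigr)$. Since $J_\nu(\lambda x)J_\nu(\lambda y)=\lambda^{2\nu}(xy/4)^\nu\,\tilde J_\nu(\lambda x)\tilde J_\nu(\lambda y)$ carries the genuine branch $\lambda^{2\nu}$, while $J_\nu(\lambda x)J_{-\nu}(\lambda y)=(x/y)^\nu\,\tilde J_\nu(\lambda x)\tilde J_{-\nu}(\lambda y)$ is even and entire, the kernel splits as $\lambda^{2\nu}f_1^{(\nu)}+f_2^{(\nu)}$ with $f_1^{(\nu)},f_2^{(\nu)}$ even entire. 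Their nonzero Taylor exponents lie in $2\Nat_0$, so the total support sits in $(2\nu+2\Nat_0)\cup 2\Nat_0\subset 2\Ganz+2\nu\Ganz$, which satisfies condition $(\ast)$ of Lemma \ref{lem6.1} (indeed $\beta\equiv0$, so this case is $z$-Hahn-holomorphic). Because $f_j^{(\nu)}$ is even entire, its $\lambda^{2k}$-coefficient obeys $|a_{j;2k}^{(\nu)}(x,y)|\le R^{-2k}\sup_{|\lambda|=R}|f_j^{(\nu)}(x,y)(\lambda)|$ for every $R>0$, so everything reduces to bounding those suprema.

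The integer case $\nu=n$ runs identically with $H_n^{(1)}=J_n+\rmi Y_n$ and the logarithmic expansion of $Y_n$, whose only multivalued contribution is $\tfrac{2}{\pi}J_n(\lambda y)\log(\lambda y/2)$. Splitting $\log(\lambda y/2)=\log\lambda+\log(y/2)$ isolates $g_1^{(n)}=-\sqrt{xy}\,J_n(\lambda x)J_n(\lambda y)$ as the coefficient of $\log\lambda$, while the finite negative-power sum $\sum_{k=0}^{n-1}\tfrac{(n-k-1)!}{k!}(\lambda y/2)^{2k-n}$, the $\log(y/2)$-remainder and the regular tail assemble into an even entire $g_2^{(n)}$; crucially $J_n(\lambda x)$ vanishes to order $n$, so multiplication lifts all negative powers back into $\Gamma^+_0$ (this is the source of the constant $c_2$ and the $x^{n+1}y^{n+1/2}$ term). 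Thus $r_\lambda^{(n)}=\log\lambda\cdot g_1^{(n)}+g_2^{(n)}$ with support in $\{(2k,-1)\}\cup\{(2k,0)\}\subset 2\Ganz\times\Ganz$, again admissible by Lemma \ref{lem6.1}, and the same Cauchy estimate applies to $g_1^{(n)},g_2^{(n)}$.

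It remains to bound the suprema, and here the four estimates split sharply. For $f_1^{(\nu)}$, $g_1^{(n)}$ and the entire/polynomial pieces of $g_2^{(n)}$ the elementary small-argument bound $|J_\nu(w)|\le\frac{|w/2|^\nu}{\Gamma(\nu+1)}e^{|\Im w|}$, applied to both factors on $|\lambda|=R$ together with $|\Im(\lambda x)|\le Rx$, $|\Im(\lambda y)|\le Ry$, yields directly the stated $(xy)^{\nu+1/2}$, $\frac{(R/2)^{2n}}{(n!)^2}$ and $x^{n+1}y^{n+1/2}$ factors with an overall $e^{R(x+y)}$, valid for \emph{all} $R>0$. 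The main obstacle is the estimate for $f_2^{(\nu)}$: the naive product of small-argument bounds gives only $\sqrt{xy}(x/y)^\nu e^{R(x+y)}$ for all $R$, whereas the asserted estimate carries the additional gain $\asymp(Ry)^{-1/2}$, i.e.\ the factor $\tfrac{1}{\sqrt R}$ and the improved $y^{-\nu}$. This decay can only come from the genuine large-argument asymptotics $|H_\nu^{(1)}(\lambda y)|\le \frac{C}{\sqrt{|\lambda y|}}e^{|\Im(\lambda y)|}$ of the Hankel function at the outer point, which is available only once $|\lambda y|$ is bounded away from $0$; this is precisely why this part of the statement is restricted to $y\ge x\ge c$ and $R\ge r_0$, forcing $|\lambda y|\ge r_0c$. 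The delicate point will be to combine this asymptotic at $\lambda y$ with the power-series behaviour of the inner factor $J_\nu(\lambda x)$ and verify that the powers of $R$, $x$ and $y$ collapse exactly to the stated form. Once all four coefficient bounds are in hand, normal convergence of each Hahn series on a small $D^{[\sigma]}_\delta$ follows by summing the geometric-type majorants exactly as in the proof of Theorem \ref{dasInverse}, admissibility of the support has been checked via Lemma \ref{lem6.1}, and holomorphy away from $0$ together with continuity at $0$ is immediate from the representation, which completes the proof.
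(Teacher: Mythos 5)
Your proposal follows essentially the same route as the paper: the connection formula $H_\nu^{(1)}=\frac{\rmi}{\sin\nu\pi}\bigl(e^{-\rmi\nu\pi}J_\nu - J_{-\nu}\bigr)$ (resp.\ the DLMF expansion of $Y_n$) to split off the $\lambda^{2\nu}$ (resp.\ $\log\lambda$) part, Cauchy's inequality on $|\lambda|=R$ for the Taylor coefficients, the small-argument bound $|h_\nu(z)|\le e^{|\Im z|}/\Gamma(\nu+1)$ for the $f_1$- and $g_j$-pieces, and the large-argument Hankel asymptotics (which is exactly what forces $y\ge x\ge c$ and $R\ge r_0$) to produce the $1/\sqrt{R}$ gain in the $f_2$-bound. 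The paper deploys that last bound slightly differently --- writing the outer factor of $f_2$ as $h_{-\nu}(\lambda y)$ and estimating it via $J_{-\nu}=\tfrac12\bigl(H_{-\nu}^{(1)}+H_{-\nu}^{(2)}\bigr)$ and the uniform DLMF Hankel estimate, rather than applying the asymptotics to $H_\nu^{(1)}(\lambda y)$ directly --- but this is the same idea, and the gain is $(Ry)^{-1/2}$ (not $y^{-\nu}$, which is already present in the prefactor of $f_2$).
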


\begin{rem}
For $\nu=0$, the expansion \eqref{rexpansion} below gives
\[
r^{(0)}_\lambda(x,y)= -\sqrt{xy}\,\log\frac{\lambda y}{2}+\underline{h}(x,y)(\lambda)
\]
with a Hahn-holomorphic function $\underline{h}(x,y)$. In particular, $\lambda\mapsto r^{(0)}_\lambda(x,y)$ is $z\log z$-Hahn-meromorphic.
\end{rem}

For $c>0$ let $\chi_c:[0,\infty)\to \Reell_+$ be a smooth cutoff-function with
\[
\chi_c(x)=\begin{cases}
0,& x\le c \\
1,&x\ge 2c
\end{cases} 
\]
Multiplication with this function  defines a bounded operator on $L^2((0,\infty))$. The ``restricted resolvent'' $\chi_c (B_\nu-\lambda^2)^{-1} \chi_c$ then is the bounded operator on $L^2((0,\infty))$ with integral kernel 
\[
\big(\chi_c\circ r_\lambda^{(\nu)}\big)(x,y):=  \chi_c(x)\cdot r_\lambda^{(\nu)}(x,y)\cdot \chi_c(y).
\]
\vspace{1ex}

\begin{proposition}\label{propforfunc}
Let $I=(0,\infty)$, $\nu>0$ and $c>0$. For any $\kappa>0$ and $\sigma>0$ the restricted resolvent
$\chi_c (B_\nu-\lambda^2)^{-1} \chi_c$ extends, as a function of $\lambda$, to a $z\log z$-Hahn holomorphic function on some neighborhood $D_r^{[\sigma]}$ of $0$ with
values in the compact operators 
\[
\mathcal{K}\big(L^2(I, e^{\kappa x} \,dx ) , \: L^2(I, e^{-\kappa x} \,dx )\big).
\]
\end{proposition}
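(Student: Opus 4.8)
\emph{Strategy.} The idea is to lift the pointwise (in $x,y$) Hahn expansion of the kernel furnished by Lemma \ref{lemkernbd} to an operator-valued Hahn series, estimating each Hahn coefficient in Hilbert--Schmidt norm against the weights $e^{\pm\kappa x}$. Write $H_\pm=L^2(I,e^{\pm\kappa x}\,dx)$. The multiplication maps $U_\pm\colon H_\pm\to L^2(I,dx)$, $(U_\pm f)(x)=e^{\pm\kappa x/2}f(x)$, are unitary, and the operator $U_-TU_+^{-1}$ associated to an integral operator $T$ with Schwartz kernel $K(x,y)$ (taken with respect to $dx$) has kernel $e^{-\kappa x/2}K(x,y)e^{-\kappa y/2}$. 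Hence $T\colon H_+\to H_-$ is Hilbert--Schmidt with
\[
\|T\|_{\mathrm{HS}}^2=\int_I\!\int_I |K(x,y)|^2\,e^{-\kappa x}e^{-\kappa y}\,dx\,dy,
\]
and since Hilbert--Schmidt operators are compact and $\|\cdot\|\le\|\cdot\|_{\mathrm{HS}}$, it suffices to build the restricted resolvent as a Hahn series, normally convergent in $\|\cdot\|_{\mathrm{HS}}$, whose coefficients are the integral operators attached to the Hahn coefficients of the kernel.

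\emph{The coefficient operators and their bounds.} For $\nu\in\Reell_+\setminus\Nat_0$ I let $A_{j,2k}$ be the operator with kernel $\chi_c(x)\,a^{(\nu)}_{j;2k}(x,y)\,\chi_c(y)$, and for $\nu=n\in\Nat$ I let $B_{j,2k}$ be the operator with kernel $\chi_c(x)\,b^{(n)}_{j;2k}(x,y)\,\chi_c(y)$. The cutoffs confine every integral to $x,y\ge c$; using the symmetry $r^{(\nu)}_\lambda(x,y)=r^{(\nu)}_\lambda(y,x)$ of the kernel together with the coefficient estimates of Lemma \ref{lemkernbd}, each Hilbert--Schmidt integral is bounded by $R^{-4k}$ times a double integral of a fixed polynomial in $x,y$ against $e^{(2R-\kappa)(x+y)}$. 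Such an integral is finite exactly when $2R<\kappa$. Since the estimates of Lemma \ref{lemkernbd} are available for every $R>0$ (and for all $R\ge r_0$ with $r_0$ at our disposal in the estimate for $a^{(\nu)}_{2;2k}$), I fix any $R$ with $0<R<\kappa/2$ and obtain $\|A_{j,2k}\|_{\mathrm{HS}}\le C\,R^{-2k}$, and analogously $\|B_{j,2k}\|_{\mathrm{HS}}\le C\,R^{-2k}$.

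\emph{Normal convergence and identification.} I then choose the radius $r$ small, in particular $r<R$. On $D_r^{[\sigma]}$ one has $\sup|z^{\alpha}|\le r^{\alpha}$ for real $\alpha\ge 0$ and, for $r$ sufficiently small, $\sup|z^{2k}\log z|\le r^{2k}(\log(1/r)+\sigma)$. Thus the candidate series
\[
\sum_k A_{2,2k}\,z^{2k}+z^{2\nu}\sum_k A_{1,2k}\,z^{2k}
\qquad\Big(\text{resp. }\ \sum_k B_{2,2k}\,z^{2k}+(\log z)\sum_k B_{1,2k}\,z^{2k}\Big)
\]
has sum of Hilbert--Schmidt norms dominated by a constant depending on $r,\sigma,\nu$ times $\sum_k (r/R)^{2k}<\infty$. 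Its support lies in the fixed support of the scalar functions $\lambda\mapsto r^{(\nu)}_\lambda(x,y)$, which is admissible in the sense of $(\ast)$ by Lemma \ref{lemkernbd}; hence the series defines a $z\log z$-Hahn holomorphic function with values in $\mathcal{K}(H_+,H_-)$. Finally, for $\lambda$ in the resolvent set near $0$, normal convergence in $\|\cdot\|_{\mathrm{HS}}$ forces convergence of the kernels in $L^2(e^{-\kappa x}e^{-\kappa y}\,dx\,dy)$ to $\chi_c(x)\,r^{(\nu)}_\lambda(x,y)\,\chi_c(y)$ by Lemma \ref{lemkernbd}, so the sum of the series coincides with $\chi_c(B_\nu-\lambda^2)^{-1}\chi_c$ on the dense subspace $H_+\cap L^2(I,dx)$; this identifies the constructed function as the asserted extension.

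\emph{Main obstacle.} The crux is the compatibility of the exponential growth $e^{R(x+y)}$ built into the coefficient bounds of Lemma \ref{lemkernbd} with the exponential weights $e^{\mp\kappa x}$ that define the target spaces. Everything hinges on the estimates being valid for a range of $R$ reaching below $\kappa/2$: this is precisely what lets the weight render the Hilbert--Schmidt integrals finite, while the accompanying factor $R^{-2k}$ simultaneously guarantees a positive radius of normal convergence.
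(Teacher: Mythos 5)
Your argument is correct and follows essentially the same route as the paper's proof: both use the coefficient bounds of Lemma \ref{lemkernbd} to show that each cutoff Hahn coefficient is Hilbert--Schmidt between the weighted spaces with norm $O(R^{-2k})$, choosing $R$ so that $2R-\kappa<0$ (the paper fixes $R=r_0=\kappa/3$ where you allow any $R<\kappa/2$), and then sum the resulting operator-valued Hahn series normally for $|\lambda|$ small. The only additions you make are the explicit unitary conjugation justifying the Hilbert--Schmidt criterion and the explicit identification step, both of which the paper leaves implicit.
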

\begin{proof} First let $\nu\notin \Nat_0$. In Lemma \ref{lemkernbd}a), let $r_0=R=\kappa/3$. 
Using
\begin{subequations}
\begin{align}
\int_c^\infty \int_c^\infty \min(x,y)\Big(\frac{\min(x,y)}{\max(x,y)}\Big)^{2\nu}e^{(2R-\kappa)(x+y)}\,dx\,dy \le C(\kappa)\label{a02}
\intertext{and}
\int_c^\infty\int_c^\infty (x y)^{2\nu+1}e^{(2R-\kappa)(x+y)}\,dx\,dy \le \Big(\frac{\Gamma(2+2\nu)}{(\kappa/3)^{2+2\nu}}\Big)^2\label{a01}
\end{align}
\end{subequations}
it is easy to see that the coefficients $a_{j;2k}^{(\nu)}(x,y)$ of the Hahn series expansion of $r^{(\nu)}$ satisfy
\[
|\chi_c\circ a_{j;2k}^{(\nu)}| \in L^2(I\times I, e^{-\kappa (x+y)}\,dx\otimes dy),\quad j=1,2.
\]
Therefore the kernels $\{\chi_c\circ a_{j;2k}^{(\nu)}\}_k$ define  Hilbert-Schmidt-Operators 
\[
A_{j;2k}^{(\nu)}: L^2(I, e^{\kappa x} \,dx ) \to  L^2(I, e^{-\kappa x} \,dx)=:\mathcal{H}_{\kappa}
\]
with norm bounded from above by  
\begin{equation}\label{constbound1}
\|A_{j;2k}^{(\nu)}\|\le \|\chi_c\circ a_{j;2k}^{(\nu)}\|_{\mathcal{H}_{\kappa}\times \mathcal{H}_{\kappa}} \le R^{-2 k} C(\nu,\kappa),
\end{equation}
where $C(\nu,\kappa)$ can be obtained from \eqref{firstconstant},\eqref{a02},\eqref{secondconstant},\eqref{a01}.
But then the series
\[
\lambda^{2\nu}\sum_{k=0}^\infty \|A_{1;2k}^{(\nu)}\| |\lambda|^{2k}+\sum_{k=0}^\infty \|A_{2;2k}^{(\nu)}\| |\lambda|^{2k}
\]
converges normally in some neighborhood $U\subset D_r^{[\sigma]}$ of $0$ and the kernel $r_\lambda^{(\nu)}$
defines a $z$-Hahn-holomorphic family of Hilbert-Schmidt operators in
\[
\mathcal{K}\big( L^2\big(I, e^{\kappa x} \,dx ),\: L^2(I, e^{-\kappa x} \,dx)\: \big).
\]
For integral $\nu=n\in\Nat$ we can argue similarly, using Lemma \ref{lemkernbd}b).
\end{proof}

Finally we come to the

\begin{proof}[\textbf{Proof of Lemma \ref{lemkernbd}}]
First let $\nu\notin \Nat_0$.

Recall
\begin{align*}
J_\nu(z)&=\left(\tfrac{z}{2}\right)^{\nu}h_\nu(z),\quad h_\nu(z)=\sum_{k=0}^\infty a_k^{(\nu)} z^{2k},\quad 
\quad\text{with}\quad      {a}_{k}^{(\nu)}=\frac{(-1)^k}{4^k\,k!\Gamma(k+\nu+1)}
 \\
 \intertext{The function $h_\nu$ is entire.}
H_\nu^{(1)}(z)&=\frac{\rmi}{\sin\nu\pi} \Big(J_\nu(z)e^{-\rmi\nu\pi}-J_{-\nu}(z)\Big),\quad H_n^{(1)}(z)=\lim_{\nu\to n}H_\nu^{(1)}(z),\quad n\in\Ganz
\end{align*}

\textit{a)} Let $x\le y$. Then
\[
-\tfrac{2\rmi}{\pi}r_\lambda^{(\nu)}(x,y)=\sqrt{x y}J_\nu(\lambda x)H_\nu^{(1)}(\lambda y)=\lambda^{2\nu} f_1^{(\nu)}(x,y)(\lambda)+ f_2^{(\nu)}(x,y)(\lambda),
\]
with even, analytic functions in $\lambda$:
\begin{align*}
f_1^{(\nu)}(x,y)(\lambda)&=\frac{\rmi  e^{-\rmi\nu\pi}}{ 4^\nu \sin\nu\pi} (x y)^{\nu+1/2} h_\nu(\lambda x)h_\nu(\lambda y) \\
f_2^{(\nu)}(x,y)(\lambda)&=\frac{-\rmi}{ \sin\nu\pi} \sqrt{x y}\left(\frac{x}{y}\right)^\nu  h_\nu(\lambda x)h_{-\nu}(\lambda y) 
\end{align*}
Due to Cauchy's integral formula
\[
|a_{j,2k}^{(\nu)}(x,y)|\le R^{-2k} \sup_{|\lambda|=R} |f_j^{(\nu)}(x,y)|,\qquad j=1,2.
\]
We know from e.g. \cite[eq. (10.14.4)]{dlmf-bessel} that for $\nu\ge  0$ 
\begin{equation}
|h_\nu(z)|\le \frac{e^{|\Im z|}}{\Gamma(\nu+1)}. \label{boundh1}
\end{equation}
Using $J_{\nu}=\frac{1}{2}(H_\nu^{(1)}+ H_\nu^{(2)})$, $|H_{-\nu}^{(*)}|=|H_{\nu}^{(*)}|$, and that $h_{-\nu}$ is a holomorphic and even function: 
\[
(R/2)^\nu \sup_{|z|=R}|h_{-\nu}(z)|=\sup_{\substack{|z|=R\\ \Re(z)>0}}|J_{-\nu}(z)|
\le \sup_{\substack{|z|=R\\ \Re(z)>0}} \frac{1}{2}\Big( |H_\nu^{(1)}(z)|+ |H_\nu^{(2)}(z)| \Big)
\]
But from \cite[eq. (10.17.13)]{dlmf-bessel} for $-\frac{\pi}{2}<\arg(z)<\frac{\pi}{2}$,
\[
|H_\nu^{(1;2)}(z)|\le \sqrt{\frac{2}{\pi|z|}}e^{\mp \Im z} \Big(1+\tau_\nu(|z|)e^{\tau_\nu(|z|)}\Big),\qquad
\tau_\nu(s):=\tfrac{\pi}{2}|\nu^2-\tfrac{1}{4}|\cdot s^{-1}
\]
Thus there exists a constant $C_1>0$ with
\[
\sup_{\substack{|\lambda|=R\\ \Re(\lambda)>0}} |H_\nu^{(1;2)}(\lambda y)|\le C_1 \frac{e^{R y}}{\sqrt{R y}}(1+\tau(R) e^{\tau(R)}) ,
\qquad  y\ge c,\quad \tau(R):=\frac{\pi}{2}\frac{|\nu^2-1/4|}{R c}
\]
This shows that for every $r_0>0$ there is a constant $C$, such that for every $R\ge r_0$
\begin{subequations}
\begin{align}
|a_{2;2k}^{(\nu)}(x,y)|
&\le R^{-2k-\nu}\frac{C_\nu}{\sqrt{R}}   \sqrt{x}(x/y)^\nu  e^{R (x+y)},\qquad C_\nu:=\frac{C\cdot 2^\nu(1+\tau(r_0) e^{\tau(r_0)})}{|\sin(\nu\pi)|\Gamma(\nu+1)} \label{firstconstant}\\
\intertext{Also from \eqref{boundh1}:}
|a_{1;2k}^{(\nu)}(x,y)| &\le R^{-2k} \frac{ (x y)^{\nu+1/2} e^{R(x+y)}}{ 4^\nu |\sin\nu\pi|\,\Gamma(\nu+1)^2}.\label{secondconstant}
\end{align}
\end{subequations}

\vspace{2ex}
\textit{b)} Let $\nu=n\in\Nat$, and $x\le y$. Then  from $H_n^{(1)}=J_n+ \rmi Y_n$ and \cite[eq. (10.8.1)]{dlmf-bessel},
\begin{multline}\label{rexpansion}
\tfrac{-2\rmi}{\pi\sqrt{x y}}r_\lambda^{(n)}(x,y)=\frac{2\rmi}{\pi}(\log\lambda+\log\tfrac{y}{2})\cdot J_n(\lambda x)J_n(\lambda y)+J_n(\lambda x)J_n(\lambda y) \\
-\frac{\rmi}{\pi}h_n(\lambda x)\sum_{k=0}^{n-1}\frac{(n-k-1)!}{k!}(\lambda y/2)^{2k} \\
-J_n(\lambda x)\Big(\frac{\lambda y}{2}\Big)^n\frac{1}{\pi}\sum_{k=0}^\infty \frac{\psi(k+1)+\psi(n+k+1)}{k!(n+k)!}(-1)^k(\lambda y/2)^{2k}
\end{multline}
with $\psi(x)=\Gamma'(x)/\Gamma(x)$.
The only logarithmic terms in the Hahn-series expansion of $r^{(n)}(x,y)$ are $e_{(2k,-1)}(\lambda), k\ge n$.
Because of \eqref{boundh1}, the coefficient of $e_{(2k,-1)}$ is bounded by
\[
R^{-2k} \sqrt{xy}\frac{(R/2)^{2n}}{(n!)^2}e^{R(x+y)},\quad R>0.
\]
From Stirling's inequalities for $\Gamma$ we obtain for $0\le k\to\infty$
\begin{equation}\label{stir}
1\ge \sqrt{k+1}\frac{(2k)!}{4^k (k!)^2}\sim \frac{1}{\sqrt{\pi}},
\end{equation}
hence
\[
\frac{(n-k-1)!(2k)!}{4^k k! n!} =\frac{1}{(n-k)\binom{n}{k}}\cdot \frac{(2k)!}{4^k (k!)^2} \le \frac{1}{n}, \qquad 0\le k<n.
\]
Because $|z|^{2k}\le (2k)! e^{|z|}$ and  \eqref{boundh1}, the norm of the sum in the second line of \eqref{rexpansion} can be bounded by
$
\frac{e^{|\Im\lambda| x}}{\pi} e^{|\lambda y|}.
$

For the last line in \eqref{rexpansion} we first note that
the polygamma function is monotonously increasing and $\psi(k)\lesssim \log(k), k>0$ and estimate as above
\begin{gather*}
\sum_{k=0}^\infty \left|\frac{\psi(k+1)+\psi(n+k+1)}{k!(n+k)! 4^k}\right| |\lambda y|^{2k}
\le e^{|\lambda y|} \sum_{k=0}^\infty \frac{1}{\sqrt{k+1}}\frac{2\log(n+k+1)}{(k+n)\cdot(k+n-1)\cdots(k+1)} \\
\le 2 e^{|\lambda y|} \sum_{k=0}^\infty \frac{1}{\sqrt{k+1}(k+n)^{2/3}}\frac{\log(n+k+1)}{(k+n)^{1/3}}\frac{1}{(n-1)!}
\le \frac{c_1\, e^{|\lambda y|}}{(n-1)!}
\end{gather*}
where the constant $c_1$ can be obtained from $\zeta(7/6)$ and $x^{-1/3}\log(x+1)<\frac{3}{2}$ for $x>0$.

Altogether this shows that the coefficient of $\lambda^{2k}$ in $r_\lambda^{(\nu)}(x,y)$ is bounded by
\[
C R^{-2k}\sqrt{xy} e^{R(x+y)} \Big((\log(y/2)+1)\frac{(xy)^{n} (R/2)^{2n}}{(n!)^2}+\frac{1}{\pi}+c_1 (xy)^n \frac{(R/2)^{2n}}{n!(n-1)!}\Big),\quad R>0.
\]
and for $y\ge x\ge c$ this is smaller than 
\[
 R^{-2k} e^{R(x+y)} \Big(\hat{c}_1 x^{n+1}y^{n+1/2}\frac{(R/2)^{2n}}{ n!(n-1)!}+c_2\Big),\qquad R>0.
\]
\end{proof}

\vspace{2ex}

\subsection{The resolvent of the Laplace-Operator on cones.}~

Let $Z=(0,\infty)\times M$ be equipped with the cone metric $g^Z=dx^2+x^2 g^M$, where $(M,g^M)$ is a compact $n$-dimensional Riemannian manifold (without boundary); we will call $Z$ a \emph{cone}.
We consider the Friedrichs  extension $\Delta$ of the Laplace operator on compactly supported functions $C_0^\infty(Z)$ to $L^2(Z,g^Z)$. 
Under the isometry
\[
 \Psi: L^2(Z,dx^2+g^M) \to L^2(Z,g^Z), \qquad f(x,p)\mapsto x^{-n/2}f(x,p)
\]
this Laplacian becomes
\[
\widetilde\Delta:=\Psi^{-1}\circ\Delta\circ\Psi=-\frac{\partial^2}{\partial x^2}+\frac{1}{x^2}\left(\Delta_M+\tfrac{n}{2}\left(\tfrac{n}{2}-1\right)\right).
\]
Let $\{\mu_k\}$ be the
eigenvalues of the Laplace operator $\Delta_M$ on $L^2(M)$ and define $\nu_k:=\nu(\mu_k)$ as the positive solution of $\nu_k^2-\frac{1}{4}=\frac{n}{2}\left(\frac{n}{2}-1\right)+\mu_k$.
Let $V$ be the set of these solutions and  let $\{\phi_\nu\}_{\nu\in V}$ be the corresponding orthonormal Hilbert space basis of $L^2(M)$ consisting of eigenfunctions of $\Delta_{M}$, such that $\Delta_{M}\phi_{\nu(\mu)}=\mu\phi_{\nu(\mu)}$.

For a smooth function $f(x,p)=\sum_{\nu\in V} f_\nu(x)\phi_\nu(p)\in L^2(Z)$ we obtain
\[
\Psi^{-1}(\Delta-\lambda^2)\Psi f(x,p)=\sum_{\nu\in V}\big((B_\nu-\lambda^2)f_\nu\big)(x) \phi_\nu(p),
\]
where $B_\nu$ is the Bessel operator defined in \eqref{besselop}.

Let $\lambda\in\Complex$ with $\Im \lambda>0$, in particular $\lambda^2$ lies in the resolvent set of $\Delta$. Then the integral kernel of the resolvent $(\widetilde\Delta-\lambda^2)^{-1}$ is given by
\begin{equation}\label{theresolventkernel}
K((x,p),(y,q),\lambda)=\sum_{\nu\in V} r^{(\nu)}(x,y)(\lambda) \phi_\nu(q)\otimes \phi_\nu(p),
\end{equation}
where $r^{(\nu)}$ is defined in \eqref{defkernel}.
Recall from Lemma \ref{lemkernbd} that $r^{(\nu)}$ is a $z\log z$-Hahn holomorphic function,
\[
r^{(\nu)}(x,y)(\lambda)=\sum_{\gamma\in \tilde S_\nu\subset\Reell^2}a^{(\nu)}_\gamma(x,y)e_\gamma(\lambda),\quad e_{(\alpha,\beta)}(\lambda):=\lambda^\alpha (-\log \lambda)^{-\beta},
\]
where $\tilde S_\nu$ is the Hahn series support of $r^{(\nu)}$.
In this expansion, logarithmic terms occur only for $\nu\in\Nat$.

Take $\mathcal{G}\subset \Reell^2$ to be the group generated by $\bigcup_\nu S_\nu$ for $S_\nu:= \bigcup_{x,y\in(0,\infty)}\tilde S_\nu(x,y)$. Then it is clear that the resolvent kernel is a Hahn-series with support in $\mathcal{G}\subset\Reell^2$:
\begin{align}
K((x,p),(y,q),\lambda)&:=\sum_{\nu\in V} r^{(\nu)}(x,y)(\lambda) \,\phi_\nu(q)\otimes \phi_\nu(p)\nonumber\\
&=\sum_{\nu\in V} \sum_{\gamma\in S_\nu}a_\gamma^{(\nu)}(x,y)e_\gamma(\lambda) \,\phi_\nu(q)\otimes \phi_\nu(p)\nonumber\\
&=\sum_{\gamma\in\mathcal{G}} e_\gamma(\lambda) \big(\hspace{-1em}\sum_{\nu\in V: \gamma\in S_\nu}a_\gamma^{(\nu)}(x,y)\,  \phi_\nu(q)\otimes \phi_\nu(p) \big)\nonumber\\
&=:\sum_{\gamma\in\mathcal{G}}  \tilde{a}_\gamma((x,p),(y,q))\:e_\gamma(\lambda) \label{kernelK}
\end{align}
where we have set $\tilde{a}_\gamma=0$, if $\gamma\notin \bigcup_{\nu\in V} S_\nu$.

To show normal convergence of the operator valued series defined by \eqref{kernelK}, we will make the additional assumption that each $\nu\in V$ either is an integer, or is not ``too close'' to an integer in the following sense:
\begin{definition}\label{suitablenu}
For $\kappa>0$ a family of orders $V\subset \Reell_{\ge 0}$  is called $\kappa$-\emph{suitable}, if the set
\begin{equation}\label{einebedingung}
\left\{  \big((2\kappa)^\nu \sin(\nu\pi)\Gamma(\nu+1)\big)^{-1} \;\Big|\; \nu\in V\setminus \Nat \right\}\quad\text{is bounded.}
\end{equation}
\end{definition}

\begin{example}\label{ex7.3}
For $M=S^n, n\ge 1$, the $n$-sphere equipped with the standard metric, it is well-known (see e.g.~\cite{shubin}, \S 22) that the eigenvalues of the Laplace operator on functions are
$
\mu_k=k(k+n-1), k\in\Nat_0 $
with multiplicity $m_k:=\binom{n+k}{n}-\binom{n+k-2}{n}$.
Then
$
\nu_k:=\nu(\mu_k):= \frac{n-1}{2}+k
$ is  (half-)integral for odd (even) $n$ and
$V=\big(\nu_0,\:\nu_1,\ldots,\nu_1,\:\nu_2,\ldots\big)$, where
each $\nu_j$ appears  $m_j$ times. For $n\ge 2$ all $\nu(\mu_k)$ are positive.
\end{example}

In section \ref{section:71} we defined a smooth cutoff function $\chi$,
which can be extended to a bounded operator $\chi$ on $L^2((0,\infty)\times M,dx)$ by setting
\[
\chi(f)(x,p)=\chi(x)f(x,p),\quad\text{for}\quad f\in C_0^\infty(Z), x\in (0,\infty), p\in M,
\]
and taking the closure.

The main result of this section is

\begin{theorem}\label{propext1.2}~
\nopagebreak\par
\vspace{-1ex}Let $c,\sigma,\kappa>0$ and assume that the family $V=\{\nu\}$ of orders is $\kappa$-suitable. Then the restricted resolvent 
$\chi_c (\widetilde\Delta-\lambda^2)^{-1} \chi_c$  extends, as a function of $\lambda$, to a $z\log z$-Hahn meromorphic function on some $D_r^{[\sigma]}$ with
values in
\begin{equation}\label{weightscpt}
\mathcal{K}\big(L^2(Z, e^{\kappa\,x^{2}} \,dx\otimes \vol_{M} ) , \: L^2(Z, e^{-\kappa\, x^{2}} \,dx\otimes \vol_{M} )\big),
\end{equation}
where the only term $\lambda^\alpha (- \log \lambda)^{-\beta}$ in its Hahn series expansion with $(\alpha,\beta)<0$ that possibly has a non-zero coefficient is the one with $(\alpha,\beta) = (0,-1)$,
and its coefficient has finite rank.
If $V$ does not contain $\nu=0$, then in a (possibly smaller) neighborhood of zero this function is Hahn holomorphic.
\end{theorem}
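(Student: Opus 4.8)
The plan is to exploit the spectral decomposition of $\Delta_M$ that already underlies \eqref{theresolventkernel}: under $L^2(Z)\cong\bigoplus_{\nu\in V}L^2((0,\infty))\otimes\Complex\phi_\nu$, the restricted resolvent $\chi_c(\widetilde\Delta-\lambda^2)^{-1}\chi_c$ is \emph{block diagonal}, the $\nu$-block being exactly the restricted Bessel resolvent $\chi_c\, r^{(\nu)}(\,\cdot\,)(\lambda)\,\chi_c$ of Proposition \ref{propforfunc}. First I would record that each block is, by Proposition \ref{propforfunc} together with Lemma \ref{lemkernbd}, a $z\log z$-Hahn holomorphic family of Hilbert--Schmidt operators (Hahn meromorphic when $\nu=0$), and then assemble the blocks into the single operator-valued Hahn series \eqref{kernelK}. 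The whole argument then reduces to showing that \eqref{kernelK} converges normally on some $D_r^{[\sigma]}$ with values in the compact operators between the weighted spaces, and to reading off its negative exponents; a normally convergent admissible Hahn series defines a $z\log z$-Hahn holomorphic function, as noted after Definition \ref{hhf}.

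For the normal convergence I would split the exponents into the two families dictated by Lemma \ref{lemkernbd}. The terms coming from $f_1^{(\nu)}$ (and, for integral $\nu=n$, the $\log$-terms and the even entire part) carry exponents $(2\nu+2k,0)$, so $\nu$ sits in the exponent; here $\|e_{(2\nu+2k,0)}\|_{Y,\eps}=\eps^{2\nu+2k}$ supplies a factor $\eps^{2\nu}$, and since Weyl's law forces $\nu\to\infty$ with only polynomially growing multiplicities, $\sum_{\nu}\eps^{2\nu}$ converges for $\eps<1$; the $\kappa$-suitability \eqref{einebedingung} is what renders the Hilbert--Schmidt norms of the block coefficients uniformly bounded in $\nu$, once the Gaussian weight $e^{-\kappa x^2}$ is used to evaluate $\int x^{2\nu+1}e^{2Rx-\kappa x^2}\,dx\sim\kappa^{-\nu}\Gamma(\nu+1)$ and thereby cancel the $\Gamma(\nu+1)^2$ in the denominator of $a_{1;2k}^{(\nu)}$. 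The terms from $f_2^{(\nu)}$ carry the $\nu$-independent exponents $(2k,0)$, so \emph{all} orders feed the same coefficient; here I would use that the blocks act on mutually orthogonal subspaces, whence the operator norm of the coefficient of $e_{(2k,0)}$ equals $\sup_{\nu}\|A_{2;2k}^{(\nu)}\|\le\sup_\nu\|A_{2;2k}^{(\nu)}\|_{\mathrm{HS}}$. Bounding this supremum via Lemma \ref{lemkernbd} (using the off-diagonal decay $(x/y)^{\nu}$ and again the Gaussian weight) and invoking \eqref{einebedingung} to control $1/(\sin(\nu\pi)\Gamma(\nu+1))$ uniformly yields $\le C\,R^{-2k}$; summation against $\eps^{2k}$ is then a convergent geometric series for $\eps<R$. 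Both families together give normal convergence, and since each coefficient is block-diagonal Hilbert--Schmidt with norms tending to $0$ it is compact between the indicated weighted spaces.

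It remains to locate the negative exponents. For $\nu>0$ all pieces in Lemma \ref{lemkernbd} have exponents $(\alpha,\beta)\ge(0,0)$: the $f_1,f_2$ and even entire parts give $\alpha\ge 0,\ \beta=0$, and for integral $\nu=n\ge1$ the logarithmic terms sit at $(2k,-1)$ with $k\ge n$, hence at $(2n,-1)>(0,0)$. Thus the \emph{only} source of an exponent $<(0,0)$ is $\nu=0$, which by the Remark following Lemma \ref{lemkernbd} contributes precisely $\sqrt{xy}\,e_{(0,-1)}$ on the $\mu=0$ eigenspace of $\Delta_M$. Since $\nu=0$ forces $\mu=0$, of finite multiplicity, the coefficient $\tilde a_{(0,-1)}=(\chi_c\sqrt{x})\otimes(\chi_c\sqrt{y})\otimes P_{\ker\Delta_M}$ has finite rank. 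Multiplying by the Hahn holomorphic function $g=e_{(0,1)}=(-\log\lambda)^{-1}$, whose singleton support $\{(0,1)\}$ is admissible, turns $\chi_c(\widetilde\Delta-\lambda^2)^{-1}\chi_c$ into a Hahn holomorphic function, so it is Hahn meromorphic with the single asserted negative term; if $0\notin V$ there is no negative exponent at all and the function is Hahn holomorphic on a possibly smaller $D_r^{[\sigma]}$.

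The hard part is the uniformity in the order $\nu$. For a single $\nu$ Proposition \ref{propforfunc} already suffices with a merely linear weight $e^{\kappa x}$, but summing over the infinitely many orders $\nu\in V$ requires the two structural inputs to cooperate: the \emph{quadratic} weight $e^{\kappa x^2}$ is forced because only a Gaussian integral reproduces a single factor $\Gamma(\nu+1)$ (a linear weight would produce $\Gamma(2\nu+1)$ and lose), and the $\kappa$-suitability of $V$ is exactly the hypothesis keeping the factors $1/\sin(\nu\pi)$ from the Bessel connection formula under control uniformly as $\nu$ approaches integers. Verifying that the large-order asymptotics of $J_\nu$ and $H_\nu^{(1)}$ encoded in Lemma \ref{lemkernbd} combine with these two ingredients into genuinely uniform, summable bounds is where essentially all the work sits.
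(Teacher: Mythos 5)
Your proposal follows essentially the same route as the paper's proof: the block decomposition over the eigenfunctions $\phi_\nu$, uniform Hilbert--Schmidt bounds on the coefficient operators obtained from Lemma \ref{lemkernbd} via the Gaussian integral \eqref{aa01} and the $\kappa$-suitability hypothesis, normal convergence of $\sum_\gamma\|e_\gamma\|$ using Weyl's law for the $\nu$-dependent exponents and the bound $|\log\lambda|\,|\lambda|^{2k}\le C_\sigma|\lambda|^{2k-1}$ for the logarithmic terms, and the identification of $\nu=0$ as the sole source of the finite-rank $(0,-1)$ coefficient. The argument is correct; you in fact supply somewhat more detail than the paper on the operator norm of the block-diagonal coefficients and on the finite-rank claim.
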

\begin{proof}
Let $A_\gamma$ be the operator on $L^2(Z)$ defined by the ``restricted kernel''
\[
(\chi_c\circ \tilde a_\gamma)((x,q),(y,q)):=\chi_c(x)\tilde a_\gamma((x,p),(y,q)) \chi_c(y)
\]
with $\tilde a$ from \eqref{kernelK}, so that $\sum_{\gamma\in\mathcal{G}} A_\gamma e_\gamma(\lambda)$ is the Hahn series
of the restricted resolvent.

As in \eqref{constbound1} in the proof of Proposition \ref{propforfunc}  we can estimate 
\[
 \|\chi_c\circ \tilde a_\gamma^{(\nu)}\|\le R^{-k(\gamma)} C(\nu,\kappa);
\]
now instead of \eqref{a01} we choose $R<c\, \kappa/4$ and use
\begin{equation}\label{aa01}
\int_c^\infty x^{2\nu+1}e^{2R x-\kappa x^{2}}\,dx \le 
\int_c^\infty x^{2\nu+1}e^{-\frac{\kappa}{2} x^{2}}\,dx \le 
\frac{\Gamma(\nu+1)}{2(\kappa/2)^{\nu+1}}.
\end{equation}
Because the family $V$ is $\kappa$-suitable, the constants $C(\nu,\kappa)$ are bounded in $\nu$.
Thus  the kernel
${A}_\gamma$ defines a Hilbert-Schmidt operator 
\[
A_\gamma: L^2(Z,e^{\kappa\,x^2}dx\otimes \vol_{M})\to L^2(Z,e^{-\kappa\,x^{2}} dx\otimes \vol_{M})
\]
between weighted $L^2$-spaces, with norm  bounded by 
\[
\|A_\gamma\|\le  \|\chi_c\circ \tilde a_\gamma\|\le \sup_{\nu:\gamma\in\supp r_\nu} \|\chi_c\circ a_\gamma^{(\nu)}\| \le C
\]
for all $\gamma\in \bigcup_\nu S_\nu$.

Therefore the Hahn series $\sum_{\gamma\in\mathcal{G}} A_\gamma e_\gamma(\lambda)$ is normally convergent in $D_\delta^{[\sigma]}$ for some $\delta>0$,
provided that
\[
\sum_{\gamma\in\mathcal{S}} \|e_\gamma\|_\delta<\infty.
\]
Due to Lemma \ref{lemkernbd} the support $\mathcal{S}$ is given by
\begin{equation}\label{thesupport}
\mathcal{S}=\bigcup_{\nu}\supp r_\nu=\mathcal{S}_{\mathsf{r}} \cup \mathcal{S}_{\mathsf{i}}\subset \Reell_+\times (-\Nat_0)
\end{equation}
where $\mathcal{S}_{\mathsf{i}}, \mathcal{S}_{\mathsf{r}}$ correspond to integer and, non-integer real coefficients $\nu$. Furthermore, elements in $\mathcal{S}_{\mathsf{i}}$ are of the form $(2sn+2\ell, -s)$ with $\ell\in\Nat_0, s\in\{0,1\}$, and those in  $\mathcal{S}_{\mathsf{i}}$ have the form $(2s\nu+2\ell,0), \ell\in\Nat_0,s\in\{0,1\}$ for $\nu$ non-integer.

For $0<|\lambda| < \delta<1$ and $\nu\in V\setminus \Nat_0$,
\begin{align*}
&\quad\sum_{\gamma\in\mathcal{S}_{\mathsf{r}}}|e_\gamma(\lambda)|
\le  \sum_{\ell\in\Nat_0}|\lambda^{2\ell}|+ \sum_\nu \sum_{\ell\in\Nat_0} |\lambda^{2\ell+2\nu}| 
\le \frac{1}{1-\delta^2} \Big(1+\sum_\nu|\lambda^{2\nu}|\Big)
\end{align*}

Now from Weyl's formula we obtain that there exists an $R>0$ such that
$
\sum_{\nu\in V} R^\nu<\infty.
$
This shows that for $|\lambda|<\min(\delta,\sqrt{R})$ the partial series $\sum_{\gamma\in\mathcal{S}_{\mathsf{r}}}|e_\gamma(\lambda)|$ converges absolutely.

Finally, for  $\nu=n\in\Nat$  we  use  $|\log\lambda||\lambda|^{2k}\le C_\sigma |\lambda|^{2k-1}$ to estimate $\sum_{\gamma\in\mathcal{S}_{\mathsf{i}}}|e_\gamma(\lambda)|$ by the geometric series.

Note that the only term that gives rise to a non-zero coefficient of $e_\gamma$ with $\gamma<0$ is the order $\nu=0$.
\end{proof}

\begin{rem} From the proof of Theorem \ref{propext1.2} it is clear that a similar statement holds, if the weights in  \eqref{weightscpt} are replaced by
$e^{\pm\kappa x^{1+\eps}}$ for any $\eps>0$.
\end{rem}

For the Laplace operator on differential forms $L^2(S^n,\Lambda^* T^* S^n)$, the eigenvalues $\mu$ are integers (cf.~\cite{ik-tan}, Theorem 4.2), and the corresponding $\nu=\nu(\mu_k,p)$ are square roots of integers. In this case we have
%
%
%
\begin{lemma}\label{rootsareok}
Any family $V=(\sqrt{q_i})_i$ with $q_i\in \Nat_0$ is $\kappa$-suitable for every $\kappa>0$.
\end{lemma}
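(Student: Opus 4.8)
The plan is to unwind Definition \ref{suitablenu} and show directly that the positive quantity
\[
\Phi(\nu) := \frac{1}{(2\kappa)^\nu\,|\sin(\nu\pi)|\,\Gamma(\nu+1)}
\]
stays bounded as $\nu$ ranges over $V\setminus\Nat$. Writing $\nu=\sqrt{q}$ with $q\in\Nat_0$, the elements of $V\setminus\Nat$ are exactly those $\sqrt{q}$ for which $q$ is not a perfect square, so $\nu$ is irrational and $\sin(\nu\pi)\neq 0$; in particular each $\Phi(\nu)$ is finite. Since $V\setminus\Nat$ meets every bounded interval in a finite set, it suffices to prove that $\Phi(\nu)\to 0$ as $\nu\to\infty$ along $V\setminus\Nat$: a sequence tending to zero, together with the finitely many ``small'' values, yields a bounded set.

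The key step is a quantitative lower bound on $|\sin(\sqrt{q}\,\pi)|$ that keeps $\sqrt{q}$ from approaching the integers too fast. Let $n$ be the integer nearest to $\sqrt{q}$. As $q$ is not a perfect square, $q-n^2$ is a nonzero integer, hence $|q-n^2|\ge 1$, and therefore
\[
|\sqrt{q} - n| = \frac{|q-n^2|}{\sqrt{q} + n} \ge \frac{1}{2\sqrt{q} + 1}.
\]
Since $|\sqrt{q}-n|\le \tfrac12$ and $\sin(\pi t)\ge 2t$ for $t\in[0,\tfrac12]$, this gives
\[
|\sin(\sqrt{q}\,\pi)| = \sin\!\big(\pi\,|\sqrt{q}-n|\big) \ge 2\,|\sqrt{q}-n| \ge \frac{2}{2\sqrt{q}+1},
\]
so that $1/|\sin(\nu\pi)|$ grows at most linearly in $\nu=\sqrt{q}$.

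Finally I would compare this polynomial growth with the growth of $(2\kappa)^\nu\Gamma(\nu+1)$. By Stirling's formula, $(2\kappa)^\nu\Gamma(\nu+1)\sim \sqrt{2\pi\nu}\,(2\kappa\nu/e)^\nu$, which tends to $+\infty$ for every fixed $\kappa>0$ because $2\kappa\nu/e\to\infty$; this dominates the at most linear factor $1/|\sin(\nu\pi)|$. Hence $\Phi(\nu)\le \tfrac{2\sqrt{q}+1}{2}\,\big((2\kappa)^\nu\Gamma(\nu+1)\big)^{-1}\to 0$ as $\nu\to\infty$, which completes the argument. The only delicate point is the Diophantine estimate of the second paragraph; once the separation of $\sqrt{q}$ from $\Ganz$ is made quantitative, the factorial growth of $\Gamma(\nu+1)$ easily overwhelms the slowly vanishing sine, even in the regime $2\kappa<1$ where $(2\kappa)^\nu$ itself decays.
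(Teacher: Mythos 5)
Your proof is correct and follows essentially the same route as the paper's: both obtain the quantitative separation $|\sqrt{q}-n|\gtrsim 1/\sqrt{q}$ from $|q-n^2|\ge 1$, convert it to a linear bound on $1/|\sin(\nu\pi)|$ via $\sin(\pi t)\ge 2t$ on $[0,\tfrac12]$, and then let the Stirling growth of $(2\kappa)^\nu\Gamma(\nu+1)$ absorb it. The paper's argument is only sketched, and your write-up supplies the same details it leaves implicit.
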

\begin{proof}
First one shows for $n\in\Nat_0$ and $q$ with $n^2<q<(n+1)^2$ that
\[
\min\{\sqrt{q}-n,(n+1)-\sqrt{q}\}>\frac{1}{2(n+1)}
\]
and then uses $|\sin x\pi|>2|x|$ for $0<|x|<\frac{1}{2}$ to prove for $\nu=\sqrt{q}$:
\begin{equation*}\label{bnd123}
\frac{1}{(\nu+1)|\sin\nu\pi|}<1,\qquad \frac{1}{\nu|\sin\nu\pi|}<\frac{3}{2}.
\end{equation*}
Together with Stirling's formula for the asymptotics of $\Gamma(\nu)$ this shows \eqref{einebedingung}.
\end{proof}

Therefore Theorem \ref{propext1.2} has a straightforward extension to differential forms.
A similar statement can be proven for the Laplacian acting on differential forms on $(0,\infty)\times P^n(\Complex)$, where $P^n(\Complex)$ is equipped with the Fubini-Study metric. The eigenvalues for the Laplace operator on sections of $\Lambda^p T^* P^n(\Complex)$  have been computed in Theorem 5.2 of \cite{ik-tan}, they are integers.

\vspace{4mm}

\subsection{The resolvent of the Laplace-Operator on compact perturbations of $\mathbb{R}^n$ or conic spaces}~\label{section:7.3}

Let $(Z = (0,\infty) \times M, g^Z)$ be a cone as defined in the previous section and let $X$ be a Riemannian manifold that is isometric to $Z$ away from a compact set.
This means for some $a>0$ we can identify $X$ with $X = X_a \cup_{M_a} Z_a$, where $Z_a = [a,\infty) \times M$,
$M_a = \{a\} \times M$, and $X_a$ is a compact Riemannian manifold with boundary $M_a$.

In this section we denote  by $\Delta_0$ the self-adjoint operator on the cone that is obtained from the Friedrichs' extension of the Laplace operator
on $C_0^\infty(Z)$.
Let $\Delta$ be the Laplace operator acting on compactly supported functions on $X$  and let $L$ be a formally self-adjoint first order differential operator that is compactly supported
in $X_a$ for some $a>0$. Then, of course $P := \Delta + L$ is of Laplace type and therefore essentially self-adjoint on
compactly supported smooth functions.
We will denote its self-adjoint extension by the same symbol $P$ whenever there is no
danger of confusion.
It follows from standard results in perturbation theory that the essential spectrum of $P$
equals the essential spectrum of the Laplace operator on the cone, namely $[0,\infty)$.
Moreover, it is well known that the distributional kernel of the resolvent $(P-\lambda^2)^{-1}$
has a meromorphic continuation across the spectrum away from the point $\lambda=0$. Now Theorem \ref{fredholm-thm} allows us to refine this statement and show that
the resolvent kernel is Hahn-meromorphic at $\lambda=0$ if this is true for the (restricted) kernel of $(\Delta_0-\lambda^2)^{-1}$.
The precise statement is formulated in the following theorem.
\begin{theorem}\label{hahnperturbed}
Let $a>0,\kappa>0$ and suppose that for some $\sigma>0$ the restricted resolvent
$\chi_a (\Delta_0-\lambda^2)^{-1}\chi_a $ extends, as a function of $\lambda$, to a $z \log{z}$-Hahn meromorphic function on  $D_r^{[\sigma]}$ with values in  
\[
  \mathcal{K}\big(L^2(Z, e^{\kappa x^2} \,dx\otimes \vol_{M} ) , \: L^2(Z, e^{-\kappa x^2} \,dx\otimes \vol_{M} )\big)
\]
for a group $\Gamma \subset \mathbb{Z} \times \mathbb{Z}$, such that the range of the coefficients of $e_\gamma$ with $\gamma<0$ of its Hahn series are finite rank operators
with range contained in a fixed finite dimensional subspace.
Let $\tilde \Gamma$ be the subgroup of $\mathbb{Z} \times \mathbb{Z}$
generated by $\Gamma$ and  $2\mathbb{Z} \times \{0\}$.
Then, $(P-\lambda^2)^{-1}$ has an extension, as a function of $\lambda$, to a $z \log{z}$ Hahn meromorphic function on  $D_r^{[\sigma]}$ for the group $\tilde \Gamma$
with values in 
\[
\mathcal{K}\big(L^2(X, w(x) \vol_{X} ) , \: L^2(X, w(x)^{-1} \vol_{X} )\big),
\]
where $w(x)$ is any positive function on $X$ such that $w(x) = e^{\kappa x^2}$ on $Z_a$.
Moreover, in the Hahn series expansion of this extension, the coefficients of $e_\gamma$ with $\gamma<0$  are finite rank operators.
\end{theorem}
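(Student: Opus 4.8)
The plan is to follow the classical glueing strategy from geometric scattering theory and reduce the statement to a single application of the Hahn holomorphic Fredholm theorem (Theorem \ref{fredholm-thm}). The point is that $P=\Delta+L$ differs from the model operator $\Delta_0$ only on the compact region $X_a$, so a parametrix for $(P-\lambda^2)^{-1}$ can be assembled from two building blocks whose $\lambda$-dependence is already under control: the cone resolvent $(\Delta_0-\lambda^2)^{-1}$ on the end $Z_a$, which is Hahn meromorphic by hypothesis, and a resolvent on the interior. For the latter I would cap off the end and replace $X$ by a closed manifold $\hat X$ carrying an operator $\hat P$ that agrees with $P$ on $X_a$. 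Since $\hat X$ is compact, $\hat R(\lambda):=(\hat P-\lambda^2)^{-1}$ is meromorphic in $\lambda^2$ with finite rank residues, i.e.\ Hahn meromorphic with support in $2\Ganz\times\{0\}$; this is precisely the source of the enlargement from $\Gamma$ to $\tilde\Gamma$.

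Concretely, I would choose a partition of unity $\chi_1+\chi_2=\Id$ on $X$ with $\chi_1$ supported in $X_a$ and $\chi_2$ supported on the end, together with cut-offs $\phi_i$ equal to $1$ on $\supp\chi_i$ and supported where the corresponding model operator coincides with $P$, and set
\[
Q(\lambda)=\phi_1\hat R(\lambda)\chi_1+\phi_2(\Delta_0-\lambda^2)^{-1}\chi_2 .
\]
Because $L$ is compactly supported in $X_a$ and the metrics agree on the end, a direct computation gives $(P-\lambda^2)Q(\lambda)=\Id+E(\lambda)$ with
\[
E(\lambda)=[P,\phi_1]\hat R(\lambda)\chi_1+[P,\phi_2](\Delta_0-\lambda^2)^{-1}\chi_2 ,
\]
where the commutators $[P,\phi_i]$ are first order operators whose coefficients are supported in the compact neck $\supp d\phi_i$. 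On this neck the weight $w$ is bounded above and below, so after composing with the two-smoothing resolvents and invoking interior elliptic regularity together with Rellich's theorem, each summand maps $L^2(X,w\,\vol_X)$ compactly into itself; here the cone summand is controlled by the hypothesis that $\chi_a(\Delta_0-\lambda^2)^{-1}\chi_a$ takes values in the weighted compact operators, using that $\phi_2,\chi_2$ are supported where $\chi_a=1$. Thus $E(\lambda)$ is a compact-operator-valued, Hahn meromorphic function with support in $\tilde\Gamma$.

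On a sufficiently small neighbourhood $D_r^{[\sigma]}$ the poles of $\hat R$ and of the continuation of $(\Delta_0-\lambda^2)^{-1}$ at points other than $\lambda=0$ lie at positive distance and do not enter, so the only $e_\gamma$ with $\gamma<0$ occurring in $E(\lambda)$ come from the negative coefficients of the cone resolvent, whose ranges lie in a common finite dimensional subspace by assumption and stay finite dimensional after applying the bounded operators $[P,\phi_2]$ and $\chi_2$. Hence $f:=-E$ satisfies the hypotheses of Theorem \ref{fredholm-thm}. For $\lambda=\rmi t$ with $t$ large the genuine resolvents exist and decay, so the commutator terms are small and $\Id+E(\rmi t)$ is invertible at such a point; the theorem then yields that $(\Id+E(\lambda))^{-1}$ is Hahn meromorphic on $D_r^{[\sigma]}$ with finite rank coefficients for all $e_\gamma$ with $\gamma<0$, and the usual analytic Fredholm theorem extends this across the spectrum to all of $Y$. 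Finally $(P-\lambda^2)^{-1}=Q(\lambda)(\Id+E(\lambda))^{-1}$ is Hahn meromorphic with values in $\mathcal{K}(L^2(X,w\,\vol_X),L^2(X,w^{-1}\vol_X))$, and its negative coefficients are products of the finite rank negative coefficients of $Q$ (coming from the cone resolvent and from a possible pole of $\hat R$ at $\lambda=0$) with $(\Id+E)^{-1}$, hence finite rank.

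The step I expect to be the main obstacle is the verification that $E(\lambda)$ is genuinely a compact operator on the single weighted Hilbert space and that its negative Hahn coefficients have range in a common finite dimensional subspace. This requires combining the weighted compactness hypothesis on the cone resolvent with interior elliptic estimates to absorb the derivative loss produced by the commutators $[P,\phi_i]$, while making sure that every construction respects the weight $w=e^{\kappa x^2}$ on the end yet remains harmless on the compact neck, where $w$ is comparable to $1$.
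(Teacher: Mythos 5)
Your proposal is correct and follows essentially the same route as the paper: a parametrix glued from the Hahn meromorphic cone resolvent and the meromorphic resolvent of an interior elliptic problem, an error term that is compact and Hahn meromorphic for $\tilde\Gamma$ with finite-rank negative coefficients of common range, invertibility of $\Id+E(\rmi t)$ for large $t$, and then Theorem \ref{fredholm-thm}. The only (immaterial) differences are that you use a right parametrix and cap the end off to a closed manifold, whereas the paper uses a left parametrix $Q(\lambda)(P-\lambda^2)=\Id+K(\lambda)$ and a Dirichlet problem on $X_b$, and it obtains compactness of the error term from the off-diagonal smoothness of the resolvent kernels (the supports of the cutoffs being separated) rather than from interior elliptic regularity.
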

\begin{proof}
The proof is identical to the standard proof that the meromorphic properties of the resolvent do not change
under compactly supported topological or metric perturbations. The only difference is that we apply our
Hahn-meromorphic Fredholm theorem. For the sake of completeness we give the full argument here.
By assumption we can choose $b>a>0$ such that the operators
$\Delta_0$ and $P$ agree on $C^\infty_0(Z_a)$.
Suppose $\psi_1, \psi_2,\phi_1,\phi_2$ are smooth functions on $X$ such that
$\mathrm{supp}\;\phi_1 \subset X_b$ and $\mathrm{supp}\;\psi_1 \subset X_a$
and such that
\begin{gather*} \psi_1 + \psi_2  =1,\quad
\psi_1 \phi_1 + \psi_2 \phi_2 =1,\quad
\mathrm{dist}(\mathrm{supp}\; d \phi_i,\mathrm{supp}\; \psi_i)>0.
\end{gather*}
Now denote by $P_{0}$ the self-adjoint operator obtained from $P$ by imposing Dirichlet boundary conditions at $M_b$. 
Since $P$ is an elliptic operator and the boundary conditions are elliptic, $P_0$ has compact resolvent
and therefore $Q_1(\Lambda):=(P_0-\lambda^2)^{-1}$ is a meromorphic function with values in $\mathcal{B}(L^2(X_b))$
and the residues of its poles are finite rank operators. Let us denote by $Q_2(\lambda)$ the Hahn-meromorphic extension of
$(\Delta_0-\lambda^2)^{-1}$ that exists by assumption.
Then,
\[
Q(\lambda):= \phi_1 Q_1(\lambda) \psi_1 + \phi_2 Q_2(\lambda) \psi_2
\]
is a Hahn meromorphic family with values in \[\mathcal{K}\big(L^2(X, w(x) \vol_{X} ) , \: L^2(X, w(x)^{-1} \vol_{X} )\big)\]
with respect to the group $\tilde \Gamma$ and the coefficients of $e_\gamma$ with $\gamma<0$ of its Hahn series are finite rank operators
with range contained in a fixed finite dimensional subspace.
By construction, for $\lambda\in D_r^{[\sigma]}, \Im\lambda>0$,
\[
Q(\lambda)(P-\lambda^2)= \Id+K(\lambda)
\]
with
\[
K(\lambda)=K_1(\lambda)+K_2(\lambda),\quad K_i(\lambda):=\phi_i Q_i(\lambda) (\Delta \psi_i - 2 \nabla_{\mathrm{grad} \psi_i}).
\]
Since the integral kernels of $Q_i$ are smooth off the diagonal,
the operator
$
K(\lambda)
$
is smoothing. Moreover, its integral kernel has compact support in the second variable.

Since $Q_1(\lambda)$ is meromorphic and $Q_2(\lambda)$ is Hahn-meromorphic, and by the remarks before
$K(\lambda)$ is a Hahn meromorphic family with values in 
$
\mathcal{K}\big( L^2(X, w(x)^{-1} \vol_{X} )\big)
$
for the group $\tilde \Gamma$ and the coefficients of $e_\gamma$ with $\gamma<0$ of its Hahn series are finite rank operators
with range contained in a fixed finite dimensional subspace.
Furthermore, for $\lambda= \rmi  r$ purely imaginary one derives
$\|K_i(\rmi r)\|\le \frac{c}{r}$ for $r>1$. Therefore, for a sufficiently large $r$ the operator $\Id+K(\rmi r)$
is invertible. By the meromorphic Fredholm theory and Theorem \ref{fredholm-thm}, $(\Id+K(\lambda))^{-1}$
is a  family of operators in $\mathcal{K}\big(L^2(X, w(x)^{-1} \vol_{X} )\big)$ which is meromorphic away from zero with finite rank 
negative Laurent coefficients at its non-zero poles and finite rank coefficients of $e_\gamma$ with $\gamma<0$. 
It is Hahn meromorphic at zero for the group $\tilde \Gamma$.
Hence, we have
\[
(\Id + K(\lambda))^{-1}Q(\lambda)(P-\lambda^2) = \Id,
\]
and $(\Id + K(\lambda))^{-1}Q(\lambda)$ extends the resolvent of $P$ to a Hahn meromorphic function with the desired properties as claimed.
\end{proof}

Combining Theorem \ref{propext1.2} and Theorem \ref{hahnperturbed} we obtain
\begin{corollary}
Let $M$ be a Riemannian manifold that is isometric to $\mathbb{R}^n \backslash B_R, n\ge 2$ outside a compact set
for some sufficiently large $R>0$.
Let $P$ be a compactly supported perturbation of the Laplace operator in the sense of the Theorem \ref{hahnperturbed} and let $w(x)$ be as there. Then the resolvent $\lambda\mapsto(P-\lambda^2)^{-1}$
as a map 
\[
\{\Im\lambda>0\}\to \mathcal{K}\big(L^2(X, w(x) \vol_{X} ) , \: L^2(X, w(x)^{-1} \vol_{X} )\big),
\]
has a continuation to a function in $\lambda$ that is $z \log z$-Hahn meromorphic for the group $\Ganz\times \Ganz$.
\end{corollary}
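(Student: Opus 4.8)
The plan is to obtain the corollary purely by verifying the hypotheses of Theorem \ref{propext1.2} and Theorem \ref{hahnperturbed} and then chaining the two, since together they already supply all the analytic content. First I would record the geometric observation that activates the cone model: outside a compact set $X$ is isometric to $\Reell^n\setminus B_R$, and $\Reell^n\setminus B_R$ is isometric to the cone end $(R,\infty)\times S^{n-1}$ with metric $dx^2+x^2 g^{S^{n-1}}$, $g^{S^{n-1}}$ the round metric. Thus $X$ is exactly a compactly supported perturbation, in the sense of Section \ref{section:7.3}, of the cone $Z=(0,\infty)\times M$ with $M=S^{n-1}$, and the reference operator $\Delta_0$ is the Friedrichs Laplacian on this cone.

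Next I would check that Theorem \ref{propext1.2} applies to $\Delta_0$. Using that the eigenvalues of $\Delta_{S^{n-1}}$ are $\mu_k=k(k+n-2)$, the defining relation $\nu_k^2-\tfrac14=\tfrac{n-1}{2}(\tfrac{n-1}{2}-1)+\mu_k$ gives $\nu_k=\tfrac{n}{2}-1+k$. Two features are then immediate. First, $2\nu_k=n-2+2k\in\Ganz$, so the Hahn-series support of each $r^{(\nu_k)}$, and hence the group $\Gamma$ generated by the full collection, lies in $\Ganz\times\Ganz$. Second, the family $V=\{\nu_k\}$ is $\kappa$-suitable for every $\kappa>0$: when $n$ is odd the $\nu_k$ are half-integers, so $|\sin(\nu_k\pi)|=1$ and the quantities $((2\kappa)^{\nu}\sin(\nu\pi)\Gamma(\nu+1))^{-1}$ are bounded (indeed they tend to $0$ since $\Gamma(\nu+1)\to\infty$); when $n$ is even the $\nu_k$ are integers, so the condition \eqref{einebedingung}, which constrains only the non-integer orders, is vacuous — the single order $\nu=0$ occurring for $n=2$ being the logarithmic mode already handled separately by the Remark after Lemma \ref{lemkernbd}. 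Theorem \ref{propext1.2} then yields that $\chi_c(\Delta_0-\lambda^2)^{-1}\chi_c$ continues to a $z\log z$-Hahn meromorphic function on some $D_r^{[\sigma]}$ with values in $\mathcal{K}(L^2(Z,e^{\kappa x^2}\,dx\otimes\vol_M),L^2(Z,e^{-\kappa x^2}\,dx\otimes\vol_M))$, whose only possibly non-zero coefficient of $e_\gamma$ with $\gamma<0$ is the finite-rank coefficient of $e_{(0,-1)}$, with range in a fixed finite-dimensional subspace.

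With this in hand I would simply invoke Theorem \ref{hahnperturbed}. Its hypotheses are met verbatim: the group $\Gamma$ lies in $\Ganz\times\Ganz$, the weight $e^{\kappa x^2}$ is exactly the one appearing in the target space, and the negative-order coefficients are finite rank with range in a fixed finite-dimensional subspace (here there is a single such coefficient, the rank-one $e_{(0,-1)}$ term coming from the $\nu=0$ mode). The theorem then produces a $z\log z$-Hahn meromorphic continuation of $(P-\lambda^2)^{-1}$ for the group $\tilde\Gamma$ generated by $\Gamma$ and $2\Ganz\times\{0\}$, with values in $\mathcal{K}(L^2(X,w(x)\vol_X),L^2(X,w(x)^{-1}\vol_X))$. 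Since both $\Gamma$ and $2\Ganz\times\{0\}$ sit inside $\Ganz\times\Ganz$, so does $\tilde\Gamma$, and the asserted conclusion for the group $\Ganz\times\Ganz$ follows at once.

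A word on where the (modest) difficulty lies: the corollary carries no analytic content beyond the two cited theorems, so the whole task is the bookkeeping that their interfaces align. The one point that genuinely must be checked rather than merely quoted is the $\kappa$-suitability of the sphere's spectrum together with the integrality of $2\nu_k$, both of which hinge on the explicit value $\nu_k=\tfrac n2-1+k$. The remaining care is in confirming that the finite-rank, fixed-finite-dimensional-range condition on the negative coefficients is inherited from Theorem \ref{propext1.2} — it is, because the only such term is the rank-one logarithmic coefficient — so that Theorem \ref{hahnperturbed} applies without alteration.
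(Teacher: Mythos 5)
Your proposal is correct and follows exactly the route the paper intends: the corollary is stated immediately after the sentence ``Combining Theorem \ref{propext1.2} and Theorem \ref{hahnperturbed} we obtain,'' and your verification that $M=S^{n-1}$ gives $\nu_k=\tfrac{n}{2}-1+k$, hence $2\nu_k\in\Ganz$ and a $\kappa$-suitable family (vacuously for $n$ even, via $|\sin\nu_k\pi|=1$ and Stirling for $n$ odd), is precisely the content of Example \ref{ex7.3} and Definition \ref{suitablenu} that the paper relies on. The remaining bookkeeping (finite-rank $(0,-1)$ coefficient feeding into the hypotheses of Theorem \ref{hahnperturbed}, and $\tilde\Gamma\subset\Ganz\times\Ganz$) matches the paper's argument.
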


When $n$ is odd, then from Example \ref{ex7.3} we conclude that $\Gamma=\mathbb{Z} \times \{0\}$. In this case  Theorem \ref{hahnperturbed} and its corollary are well known and follow from the usual meromorphic Fredholm theorem.
Similar convergent expansions in the case of two dimensional potential scattering with suitable decay at infinity 
were obtained in \cite{bgd}.
For example in \cite{bgd}, Theorem 3.3, it was shown by more direct methods that the transition operator $T(k)$ in $L^2(\Reell^2)$ has a convergent expansion in powers of $k$ and $\log k$.

\begin{rem}
Let $X$ be a Riemannian manifold with an end isometric to
\[
(Z=[1,\infty)\times N, dx^2+ x^{-2a} g^N), a>0
\]
for some closed Riemannian manifold $(N,g^N)$. The spectral theory of the Laplace operator on $X$
 is examined in detail in \cite{hrs}. There the authors show that the spectral decomposition of the Laplace operator on differential forms on $Z$ can also be described with the Weber transform. The same  arguments as in section \ref{section:71}, together with the proof of Theorem \ref{hahnperturbed}
then implies that the resolvent of the Laplace operator on $X$ is $z\log z$-Hahn meromorphic, provided that the eigenvalues of the Laplace operator on $N$ lead to suitable $\nu$.
\end{rem}

\begin{rem}
 Our method may also be applied to non-compactly supported perturbations of the Laplace operator on $\mathbb{R}^n$, 
 such as for example potential perturbations that have a suitable decay rate at infinity.
 This is in line with the well known result in the odd dimensional case, that uniform exponential decay of the potential at infinity guarantees the existence of an analytic continuation of the resolvent into a neighborhood of the spectrum.
\end{rem}

\end{document}